\let\oldmarginpar\marginpar
\renewcommand\marginpar[1]{\-\oldmarginpar[\raggedleft\footnotesize #1]%
{\raggedright\footnotesize #1}}
\begin{document}

\newtheorem{theorem}{Theorem}[section]
\newtheorem{corollary}[theorem]{Corollary}
\newtheorem{lemma}[theorem]{Lemma}
\newtheorem{proposition}[theorem]{Proposition}
\theoremstyle{definition}
\newtheorem{definition}[theorem]{Definition}
\theoremstyle{remark}
\newtheorem{remark}[theorem]{Remark}
\theoremstyle{definition}
\newtheorem{example}[theorem]{Example}

\numberwithin{equation}{section}

\def\R{{\mathbb R}}
\def\H{{\mathbb H}}
\def\rank{{\text{rank}\,}}
\def\bd{{\partial}}

\title[Pointwise almost h-semi-slant submanifolds]{Pointwise almost h-semi-slant submanifolds}

\author{Kwang-Soon Park}
\address{Sogang research team for discrete and geometric structures, Sogang University, Seoul 121-742, Republic of Korea}
\email{parkksn@gmail.com}
\thanks{This research was supported by Sogang Research Team for Discrete and Geometric Structures.}

\keywords{submanifold; slant function; warped product; hyperk\"{a}hler manifold}

\subjclass[2000]{53C40; 53C42; 53C26.}   %Primary 57M50; Secondary 57N16, 53A20, 53C15}

\begin{abstract}
We introduce the notions of pointwise almost h-slant submanifolds and pointwise almost h-semi-slant submanifolds as a
generalization of slant submanifolds, pointwise slant submanifolds, semi-slant submanifolds, and pointwise semi-slant submanifolds.
We obtain a characterization and investigate the following: the integrability of distributions,
the conditions for such distributions to be totally geodesic foliations,
the mean curvature vector fields on totally umbilic submanifolds, the properties of h-slant functions and h-semi-slant functions, the properties
of non-trivial warped product proper pointwise h-semi-slant submanifolds. We also obtain the topological properties
of proper pointwise almost h-slant submanifolds and give an inequality for the squared norm of the second fundamental form in terms of
a warping function and a h-semi-slant function for a warped product submanifold of a hyperk\"{a}hler manifold.
Finally, we give some examples of such submanifolds.
\end{abstract}

\maketitle
\section{Introduction}\label{intro}
\addcontentsline{toc}{section}{Introduction}

Given a Riemannian manifold $(\overline{M},g)$ with some additional structures, there are several kinds of submanifolds:

(almost) complex submanifolds (\cite{K}, \cite{G11}, \cite{K0}, \cite{N}),
totally real submanifolds (\cite{CO}, \cite{E0}, \cite{CDVV}, \cite{CN}),
CR submanifolds (\cite{YK}, \cite{B0}, \cite{C333}, \cite{S0}),
QR submanifolds (\cite{B00}, \cite{KP}, \cite{BF}, \cite{GSK}),
slant submanifolds (\cite{C}, \cite{L}, \cite{CCFF}, \cite{MMT}, \cite{S}),
pointwise slant submanifolds \cite{CG},
semi-slant submanifolds (\cite{P}, \cite{S00}, \cite{CCFF0}, \cite{KKK}),
pointwise semi-slant submanifolds \cite{S2}, etc.

In 1990, B. Y. Chen \cite{C} defined the notion of slant submanifolds of an almost Hermitian manifold as a generalization of
almost complex submanifolds and totally real submanifolds.

In 1994, N. Papaghiuc \cite{P} introduced a semi-slant submanifold of an almost Hermitian manifold as a generalization of
CR-submanifolds and slant submanifolds.

In 1998, F. Etayo \cite{E1} defined the notion of pointwise slant submanifolds of an almost Hermitian manifold
under the name of quasi-slant submanifolds as a generalization of slant submanifolds.

In 2012, B. Y. Chen and O. J. Garay \cite{CG} studied deeply pointwise slant submanifolds of an almost Hermitian manifold.

In 2013, B. Sahin \cite{S2} introduced pointwise semi-slant submanifolds of an almost Hermitian manifold
(for further information, see (\cite{CG}, \cite{S2})).

As we know, the quaternionic K\"{a}hler manifolds have applications in physics as the target
spaces for nonlinear $\sigma-$models with supersymmetry \cite{CMMS}.

As a generalization of slant submanifolds, pointwise slant submanifolds, semi-slant submanifolds, and pointwise semi-slant submanifolds,
we will define the notions of pointwise almost h-slant submanifolds and pointwise almost h-semi-slant submanifolds.

The paper is organized as follows. In section 2 we recall some notions, which are needed in the following sections.
In section 3 we give the definitions of pointwise almost h-slant submanifolds, pointwise almost h-semi-invariant submanifolds,
and pointwise almost h-semi-slant submanifolds and obtain some properties:
a characterization of such submanifolds, the integrability of distributions, the equivalent conditions for such distributions to be
totally geodesic foliations, the mean curvature vector fields on totally umbilic submanifolds, etc. In section 4 we deal with the geometrical
properties of slant functions. In section 5 we consider proper pointwise almost h-slant submanifolds of hyperk\"{a}hler manifolds
and obtain some subalgebras of their cohomology algebras. From these topological results, we induce some geometrical properties.
In section 6 we get some properties on non-trivial warped product proper pointwise h-semi-slant submanifolds of hyperk\"{a}hler manifolds.
Using these results, we obtain an inequality for the squared norm of the second fundamental form in terms of
a warping function and a h-semi-slant function.
In section 7 we give some examples of such submanifolds.

\section{Preliminaries}\label{Prel}

Let $(\overline{M}, g)$ be a Riemannian manifold, where $\overline{M}$ is a $m$-dimensional $C^{\infty}$-manifold
and $g$ is a Riemannian metric on $\overline{M}$.
Let $M$ be a $n$-dimensional submanifold of $(\overline{M}, g)$.

Denote by $TM^{\perp}$ the normal bundle of $M$ in $\overline{M}$.

Denote by $\nabla$ and $\overline{\nabla}$ the Levi-Civita connections of $M$ and $\overline{M}$, respectively.

Then the {\em Gauss} and {\em Weingarten formulas} are given by
\begin{eqnarray}
  \overline{\nabla}_X Y & = & \nabla_X Y + h(X, Y), \label{eq: gauss} \\
  \overline{\nabla}_X Z & = & -A_Z X + D_X Z, \label{eq: weing}
\end{eqnarray}
respectively, for tangent vector fields $X,Y\in \Gamma(TM)$ and a normal vector field $Z\in \Gamma(TM^{\perp})$, where $h$ denotes the {\em second fundamental
form}, $D$ the {\em normal connection}, and $A$ the {\em shape operator} of $M$ in $\overline{M}$.

Then the second fundamental form and the shape operator are related by
\begin{equation}\label{eq: shape}
\langle A_Z X, Y \rangle = \langle h(X, Y), Z \rangle,
\end{equation}
where $\langle \ , \ \rangle$ denotes the induced metric on $M$ as well as the Riemannian metric $g$ on $\overline{M}$.
We will also denote by $g$ the induced metric on $M$.

Choose a local orthonormal frame
$\{ e_1, \cdots, e_m \}$ of $T\overline{M}$ such that $e_1, \cdots, e_n$ are tangent to $M$ and $e_{n+1}, \cdots, e_m$ are normal to $M$.

Then the {\em mean curvature vector} $H$ is defined by
\begin{equation}\label{eq: mean}
H := \frac{1}{n} trace \ h = \frac{1}{n}\sum_{i=1}^n h(e_i, e_i)
\end{equation}
and the {\em squared mean curvature} is given by $H^2 := \langle H, H \rangle$.

The {\em squared norm of the second
fundamental form} $h$ is given by
\begin{equation}\label{eq: sqnorm}
|| h ||^2 := \sum_{i,j=1}^n \langle h(e_i, e_j), h(e_i, e_j) \rangle.
\end{equation}
Let $(B, g_B)$ and $(F, g_F)$ be Riemannian manifolds, where $g_B$ and $g_F$ are Riemannian metrics on manifolds $B$ and $F$, respectively.
Let $f$ be a positive $C^{\infty}$-function on $B$. Consider the product manifold $B\times F$ with the natural projections $\pi_1 : B\times F \mapsto B$
 and $\pi_2 : B\times F \mapsto F$. The {\em warped product manifold} $M=B\times_f F$ is the product manifold $B\times F$ equipped with the Riemannian
 metric $g_M$ such that
\begin{equation}\label{eq: wmetr}
|| X ||^2 = || d\pi_1 X ||^2 + f^2(\pi_1(x)) || d\pi_2 X ||^2
\end{equation}
for any tangent vector $X\in T_x M$, $x\in M$. Thus, we get $g_M = g_B + f^2g_F$. The function $f$ is called the {\em warping function} of the warped product
manifold $M$ \cite {C6}.

If the warping function $f$ is constant, then we call the warped product manifold $M$ {\em trivial}.

Given vector fields $X\in \Gamma(TB)$ and $Y\in \Gamma(TF)$, we can obtain their natural horizontal lifts $\widetilde{X},\widetilde{Y}\in \Gamma(TM)$
such that $d\pi_1 \widetilde{X} = X$ and $d\pi_2 \widetilde{Y} = Y$.

Conveniently, identify $\widetilde{X}$ and $\widetilde{Y}$
with $X$ and $Y$, respectively.

We choose a local orthonormal frame $\{ e_1, \cdots, e_n \}$ of
the tangent bundle $TM$ of $M$ such that $e_1, \cdots, e_{n_1}\in \Gamma(TB)$ and $e_{n_1+1}, \cdots, e_{n}\in \Gamma(TF)$.

Then we have
\begin{equation}\label{eq: lapl2}
\triangle f = \sum_{i=1}^{n_1} ((\nabla_{e_i} e_i)f - e_i^2 f).
\end{equation}
Given unit vector fields $X,Y\in \Gamma(TM)$ such that $X\in \Gamma(TB)$ and $Y\in \Gamma(TF)$, we easily obtain
\begin{equation}\label{eq: warp2}
\nabla_X Y = \nabla_Y X = (X \ln f)Y,
\end{equation}
where $\nabla$ is the Levi-Civita connection of $(M, g_M)$, so that
\begin{eqnarray}
K(X\wedge Y)
& = & \langle \nabla_Y\nabla_X X - \nabla_X\nabla_Y X, Y \rangle   \label{sect3}    \\
& = & \frac{1}{f} ((\nabla_X X)f - X^2 f),  \nonumber
\end{eqnarray}
where $K(X\wedge Y)$ denotes the sectional curvature of the plane $<X, Y>$ spanned by $X$ and $Y$ over $\mathbb{R}$.

Hence,
\begin{equation}\label{eq: sect4}
\frac{\triangle f}{f} = \sum_{i=1}^{n_1} K(e_i\wedge e_j)
\end{equation}
for each $j = n_1+1, \cdots, n$.

We remind some notions in the submanifold theory. It can help us to understand our notions.
We may compare these notions with our notions and find some similarities and differences among them.

Let $(\overline{M}, g, J)$ be an almost Hermitian manifold, where $\overline{M}$ is a $C^{\infty}$-manifold,
$g$ is a Riemannian metric on $\overline{M}$, and $J$ is an almost complex structure on $\overline{M}$ which is compatible with $g$.
i.e., $J\in \text{End}(T \overline{M})$, $J^2 = -id$, $g(JX, JY) = g(X, Y)$ for $X,Y\in \Gamma(T \overline{M})$.

Let $M$ be a submanifold of $\overline{M} = (\overline{M}, g, J)$.

We call $M$ an {\em almost complex submanifold} of $\overline{M}$ if $J(T_x M)\subset T_x M$ for $x\in M$.

The submanifold $M$ is said to be a {\em totally real submanifold} if $J(T_x M)\subset T_x M^{\perp}$ for $x\in M$.

The submanifold $M$ is called a {\em CR-submanifold} \cite{B0} if there exists a distribution $\mathcal{D} \subset TM$ on $M$
such that $J(\mathcal{D}_x) = \mathcal{D}_x$ and $J(\mathcal{D}_x^{\perp}) \subset T_x M^{\perp}$ for $x\in M$,
where $\mathcal{D}^{\perp}$ is the orthogonal complement of $\mathcal{D}$ in $TM$.

We call $M$ a {\em slant submanifold} \cite{C} of $\overline{M}$ if the angle $\theta = \theta(X)$ between $JX$ and the tangent space
$T_x M$ is constant for nonzero $X\in T_x M$ and any $x\in M$.

The submanifold $M$ is said to be a {\em semi-slant submanifold} \cite{P} if there is a distribution $\mathcal{D} \subset TM$ on $M$
such that $J(\mathcal{D}_x) = \mathcal{D}_x$ for $x\in M$ and the angle $\theta = \theta(X)$ between $JX$ and the space
$\mathcal{D}_x^{\perp}$ is constant for nonzero $X\in \mathcal{D}_x^{\perp}$ and any $x\in M$,
where $\mathcal{D}^{\perp}$ is the orthogonal complement of $\mathcal{D}$ in $TM$.

The submanifold $M$ is called a {\em pointwise slant submanifold} (\cite{E1}, \cite{CG}) if at each given point $x\in M$,
the angle $\theta = \theta(X)$ between $JX$ and the tangent space
$T_x M$ is constant for nonzero $X\in T_x M$.

We call $M$ a {\em pointwise semi-slant submanifold} \cite{S2} of $\overline{M}$ if there is a distribution $\mathcal{D} \subset TM$ on $M$
such that $J(\mathcal{D}_x) = \mathcal{D}_x$ for $x\in M$ and at each given point $x\in M$, the angle $\theta = \theta(X)$ between $JX$ and the space
$\mathcal{D}_x^{\perp}$ is constant for nonzero $X\in \mathcal{D}_x^{\perp}$,
where $\mathcal{D}^{\perp}$ is the orthogonal complement of $\mathcal{D}$ in $TM$.

Now, we introduce almost quaternionic Hermitian manifolds.

Let $\overline{M}$ be a $4m-$dimensional $C^{\infty}$-manifold and let $E$ be a rank 3 subbundle of
$\text{End} (T\overline{M})$ such that for any point $p\in \overline{M}$ with a
neighborhood $U$, there exists a local basis $\{ J_1,J_2,J_3 \}$
of sections of $E$ on $U$ satisfying for all $\alpha\in \{ 1,2,3 \}$
$$
J_{\alpha}^2=-id, \quad
J_{\alpha}J_{\alpha+1}=-J_{\alpha+1}J_{\alpha}=J_{\alpha+2},
$$
where the indices are taken from $\{ 1,2,3 \}$ modulo 3.

Then we call $E$ an {\em almost quaternionic structure} on $\overline{M}$ and $(\overline{M},E)$ an {\em almost quaternionic manifold} \cite{AM}.

Moreover, let $g$ be a Riemannian metric on $\overline{M}$ such that for any point $p\in \overline{M}$ with a
neighborhood $U$, there exists a local basis $\{ J_1,J_2,J_3 \}$
of sections of $E$ on $U$ satisfying for all $\alpha\in \{ 1,2,3 \}$
\begin{equation}\label{hypercom}
J_{\alpha}^2=-id, \quad
J_{\alpha}J_{\alpha+1}=-J_{\alpha+1}J_{\alpha}=J_{\alpha+2},
\end{equation}
\begin{equation}\label{hypermet}
g(J_{\alpha}X, J_{\alpha}Y)=g(X, Y)
\end{equation}
for all vector fields  $X, Y\in \Gamma(T\overline{M})$, where the indices are
taken from $\{ 1,2,3 \}$ modulo 3.

Then we call $(\overline{M},E,g)$ an {\em almost quaternionic Hermitian manifold} \cite{IMV}.

Conveniently, the above basis $\{ J_1,J_2,J_3 \}$ satisfying (\ref{hypercom}) and (\ref{hypermet}) is said to be a {\em quaternionic Hermitian basis}.

Let $(\overline{M},E,g)$ be an almost quaternionic Hermitian manifold.

We call $(\overline{M},E,g)$ a {\em quaternionic K\"{a}hler manifold} if there exist
locally defined 1-forms $\omega_1, \omega_2, \omega_3$ such that
for $\alpha \in \{ 1,2,3 \}$
$$
\nabla_X J_{\alpha} =
\omega_{\alpha+2}(X)J_{\alpha+1}-\omega_{\alpha+1}(X)J_{\alpha+2}
$$
for any vector field $X\in \Gamma(T\overline{M})$, where the indices are
taken from $\{ 1,2,3 \}$ modulo 3 \cite{IMV}.

If there exists a global parallel quaternionic Hermitian basis $\{ J_1,J_2,J_3 \}$ of
sections of $E$ on $\overline{M}$ (i.e., $\nabla J_{\alpha} = 0$ for $\alpha \in \{ 1,2,3 \}$, where $\nabla$ is the Levi-Civita connection of the
metric $g$), then $(\overline{M}, E, g )$ is said to be a {\em hyperk\"{a}hler manifold}.
Furthermore, we call $(J_1, J_2, J_3, g )$ a {\em hyperk\"{a}hler structure} on $\overline{M}$ and $g$ a {\em hyperk\"{a}hler metric} \cite{B}.

Let $\overline{M} = (\overline{M},E,g)$ be an almost quaternionic Hermitian manifold and $M$ a submanifold of $\overline{M}$.

We call $M$ a {\em QR-submanifold} (quaternionic-real submanifold) \cite{B00} of $\overline{M}$ if there exists a vector subbundle $\mathcal{D}$
of $TM^{\perp}$ on $M$ such that given a local quaternionic Hermitian basis $\{ J_1,J_2,J_3 \}$ of $E$, we have $J_{\alpha} \mathcal{D} = \mathcal{D}$
and $J_{\alpha} (\mathcal{D}^{\perp}) \subset TM$ for $\alpha \in \{ 1,2,3 \}$,
where $\mathcal{D}^{\perp}$ is the orthogonal complement of $\mathcal{D}$ in $TM^{\perp}$.

The submanifold $M$ is said to be a {\em quaternion CR-submanifold} \cite{BCU} if there exists a distribution $\mathcal{D} \subset TM$
 on $M$ such that given a local quaternionic Hermitian basis $\{ J_1,J_2,J_3 \}$ of $E$, we get $J_{\alpha} \mathcal{D} = \mathcal{D}$
and $J_{\alpha} (\mathcal{D}^{\perp}) \subset TM^{\perp}$ for $\alpha \in \{ 1,2,3 \}$,
where $\mathcal{D}^{\perp}$ is the orthogonal complement of $\mathcal{D}$ in $TM$ (for more information, see \cite{BF}).

Throughout this paper, we will use the above notations.

\section{Pointwise almost h-semi-slant submanifolds}\label{semi}

In this section we define the notions of pointwise h-slant submanifolds, pointwise almost h-slant submanifolds,
pointwise h-semi-invariant submanifolds, pointwise almost h-semi-invariant submanifolds,
pointwise h-semi-slant submanifolds, and pointwise almost h-semi-slant submanifolds.
And we investigate the geometry of such submanifolds.

\begin{definition}
Let $(\overline{M},E,g)$ be an almost quaternionic Hermitian manifold and
$M$ a submanifold of $(\overline{M},g)$. The submanifold $M$ is called a {\em pointwise almost h-slant submanifold}
if given a point $p\in M$ with a neighborhood
$V$, there exist an open set $U\subset \overline{M}$ with $U\cap M = V$ and a quaternionic Hermitian basis $\{ I,J,K \}$ of
sections of $E$ on $U$ such that for each $R\in \{ I,J,K \}$, at each given point $q\in V$ the angle $\theta_R=\theta_R(X)$
between $RX$ and the tangent space $T_q M$ is constant for nonzero $X\in T_q M$.
\end{definition}

We call such a basis $\{ I,J,K \}$ a {\em pointwise almost h-slant basis} and
the angles $\{ \theta_I,\theta_J,\theta_K \}$ {\em almost h-slant functions} as functions on $V$.

\begin{remark}
Furthermore, if we have
$$
\theta=\theta_I=\theta_J=\theta_K,
$$
then we call the submanifold $M$ a {\em pointwise h-slant submanifold}, the basis $\{ I,J,K \}$ a {\em pointwise h-slant basis},
and the angle $\theta$ a {\em h-slant function}.
\end{remark}

\begin{definition}
Let $(\overline{M},E,g)$ be an almost quaternionic Hermitian manifold and
$M$ a submanifold of $(\overline{M},g)$. The submanifold $M$ is called a {\em pointwise almost h-semi-slant submanifold} if given a point $p\in M$ with a neighborhood $V$, there exist an open set $U\subset \overline{M}$ with $U\cap M = V$ and a  quaternionic Hermitian basis $\{ I,J,K \}$ of
sections of $E$ on $U$ such that for each $R\in \{ I,J,K \}$,
there is a distribution $\mathcal{D}_1^R \subset TM$ on $V$
such that
$$
TM =\mathcal{D}_1^R\oplus \mathcal{D}_2^R, \
R(\mathcal{D}_1^R)=\mathcal{D}_1^R,
$$
and at each given point $q\in V$ the angle $\theta_R=\theta_R(X)$ between $RX$ and the space
$(\mathcal{D}_2^R)_q$ is constant for nonzero $X\in
(\mathcal{D}_2^R)_q$, where $\mathcal{D}_2^R$ is the
orthogonal complement of $\mathcal{D}_1^R$ in $TM$.
\end{definition}

We call such a basis $\{ I,J,K \}$ a {\em pointwise almost h-semi-slant
basis} and the angles $\{ \theta_I,\theta_J,\theta_K \}$ {\em
almost h-semi-slant functions} as functions on $V$.

\begin{remark}
Furthermore, if $\mathcal{D}_1 = \mathcal{D}_1^I = \mathcal{D}_1^J = \mathcal{D}_1^K$,
then we call the submanifold $M$ a {\em pointwise h-semi-slant submanifold}, the basis $\{ I,J,K \}$ a {\em pointwise h-semi-slant basis},
and the angles $\{ \theta_I,\theta_J,\theta_K \}$ {\em h-semi-slant functions}.

Moreover, if we have
$$
\theta=\theta_I=\theta_J=\theta_K,
$$
then we call the submanifold $M$ a {\em pointwise strictly h-semi-slant submanifold}, the basis $\{ I,J,K \}$ a {\em pointwise strictly h-semi-slant basis},
and the angle $\theta$ a {\em strictly h-semi-slant function}.
\end{remark}

\begin{remark}
Let $M$ be a submanifold of an almost quaternionic Hermitian manifold $(\overline{M},E,g)$.
If $M$ is a pointwise almost h-semi-slant submanifold of $(\overline{M},E,g)$ with almost h-semi-slant functions
$\theta_I = \theta_J = \theta_K = \frac{\pi}{2}$,
then we call the submanifold $M$ a {\em pointwise almost h-semi-invariant submanifold} and the basis $\{ I,J,K \}$
a {\em pointwise almost h-semi-invariant basis}.

Similarly, if $M$ is a pointwise h-semi-slant submanifold of $(\overline{M},E,g)$ with h-semi-slant functions
$\theta_I = \theta_J = \theta_K = \frac{\pi}{2}$,
then we call the submanifold $M$ a {\em pointwise h-semi-invariant submanifold} and the basis $\{ I,J,K \}$
a {\em pointwise h-semi-invariant basis}.
\end{remark}

\begin{remark}
Let $M$ be a submanifold of an almost quaternionic Hermitian manifold $(\overline{M},E,g)$.
If $M$ is a pointwise h-semi-slant submanifold of $(\overline{M},E,g)$ with $\mathcal{D}_1 = 0$ and $\mathcal{D}_2 = TM$,
then the submanifold $M$ is a pointwise almost h-slant submanifold of $(\overline{M},E,g)$.
\end{remark}

\begin{remark}
As we know, an isometric immersion is the corresponding notion of a Riemannian submersion (\cite{O}, \cite{G}, \cite{FIP}).
In the similar way, almost h-slant submanifolds, almost h-semi-invariant submanifolds, and almost h-semi-slant submanifolds correspond to
almost h-slant submersions, almost h-semi-invariant submersions, and almost h-semi-slant submersions, respectively (\cite{P2}, \cite{P22}, \cite{P3}).
Similarly, pointwise almost h-slant submanifolds, pointwise almost h-semi-invariant submanifolds, and pointwise almost h-semi-slant submanifolds are
the corresponding notions of pointwise almost h-slant submersions, pointwise almost h-semi-invariant submersions,
and pointwise almost h-semi-slant submersions, respectively.
Therefore, we need to study deeply the notions of almost h-slant submanifolds, almost h-semi-invariant submanifolds,  almost h-semi-slant submanifolds,
pointwise almost h-slant submersions, pointwise almost h-semi-invariant submersions,  and pointwise almost h-semi-slant submersions as the future works.
\end{remark}

On a K\"{a}hler manifold $(\overline{M},g,J)$, we can consider a pointwise slant submanifold as a generalization of a slant submanifold
and a pointwise semi-slant submanifold as a generalization of a semi-slant submanifold. As we know, if we can get some functions on $\overline{M}$,
then by using these functions we may obtain some information about the manifold $\overline{M}$ and they are surely helpful to us to study
the manifold $\overline{M}$. In this point of view, pointwise slant submanifolds and pointwise semi-slant submanifolds may have much information than
slant submanifolds and semi-slant submanifolds, respectively. We also know that a hyperk\"{a}hler manifold is the generalized version of
a K\"{a}hler manifold so that the notions of slant submanifolds, semi-slant submanifolds, pointwise slant submanifolds, pointwise semi-slant submanifolds
on a K\"{a}hler manifold can be induced on a hyperk\"{a}hler manifold such as h-slant submanifolds, h-semi-slant submanifolds, pointwise h-slant submanifolds, pointwise h-semi-slant submanifolds.

In this paper, we will study the notions of pointwise h-slant submanifolds and pointwise h-semi-slant submanifolds. But there are yet no papers
which are dealing with the notions of h-slant submanifolds and h-semi-slant submanifolds so that in the future works, we can consider such notions.
Given a pointwise almost h-slant submanifold $M$ of a hyperk\"{a}hler manifold $(\overline{M},I,J,K,g)$ such that $\{ I,J,K \}$ is a pointwise
almost h-slant basis, we have three almost h-slant functions $\theta_I, \theta_J, \theta_K$ on $M$.

Let $M$ be a pointwise almost h-semi-slant submanifold of an almost quaternionic Hermitian manifold $(\overline{M},E,g)$.
Given a point $p\in M$ with a neighborhood $V$, there exist an open set $U\subset \overline{M}$ with $U\cap M = V$ and a  quaternionic
Hermitian basis $\{ I,J,K \}$ of sections of $E$ on $U$ such that for each $R\in \{ I,J,K \}$,
there is a distribution $\mathcal{D}_1^R \subset TM$ on $V$ such that
$$
TM =\mathcal{D}_1^R\oplus \mathcal{D}_2^R, \
R(\mathcal{D}_1^R)=\mathcal{D}_1^R,
$$
and at each given point $q\in V$ the angle $\theta_R=\theta_R(X)$ between $RX$ and the space
$(\mathcal{D}_2^R)_q$ is constant for nonzero $X\in (\mathcal{D}_2^R)_q$,
where $\mathcal{D}_2^R$ is the orthogonal complement of $\mathcal{D}_1^R$ in $TM$.

Then for $X\in \Gamma(TM)$ and $R\in \{ I,J,K \}$, we write
\begin{equation}\label{eq: quat11}
X = P_RX+Q_RX,
\end{equation}
where $P_RX\in \Gamma(\mathcal{D}_1^R)$ and $Q_RX\in \Gamma(\mathcal{D}_2^R)$.

For $X\in \Gamma(TM)$ and $R\in \{ I,J,K \}$, we get
\begin{equation}\label{eq: quat12}
RX = \phi_R X+\omega_R X,
\end{equation}
where $\phi_R X\in \Gamma(TM)$ and $\omega_R X\in
\Gamma(TM^{\perp})$.

For $Z\in \Gamma(TM^{\perp})$ and $R\in \{ I,J,K \}$, we have
\begin{equation}\label{eq: quat13}
RZ = B_RZ+C_RZ,
\end{equation}
where $B_RZ\in \Gamma(TM)$ and $C_RZ\in \Gamma(TM^{\perp})$.

Then given $R\in \{ I,J,K \}$, we have
\begin{equation}\label{eq: quat15}
TM^{\perp} = \omega_R \mathcal{D}_2^R \oplus \mu_R,
\end{equation}
where $\mu_R$ is the orthogonal complement of $\omega_R
\mathcal{D}_2^R$ in $TM^{\perp}$ and is $R$-invariant.

Furthermore,

{\setlength\arraycolsep{2pt}
\begin{eqnarray}
& & \phi_R \mathcal{D}_1^R = \mathcal{D}_1^R, \ \omega_R
\mathcal{D}_1^R = 0, \ \phi_R \mathcal{D}_2^R \subset
\mathcal{D}_2^R, \ B_R(TM^{\perp}) = \mathcal{D}_2^R,   \label{eq: quat150}     \\
& & \phi_R^2+B_R\omega_R = -id, \ C_R^2+\omega_R B_R = -id,   \label{eq: quat1500}      \\
& & \omega_R \phi_R +C_R\omega_R = 0, \ B_RC_R+\phi_R B_R = 0.    \label{eq: quat15000}
\end{eqnarray}}

For $X,Y\in \Gamma(TM)$ and $R\in \{ I,J,K \}$, we define
\begin{equation}\label{eq: quat16}
(\nabla_X \phi_R)Y := \nabla_X (\phi_R Y) - \phi_R \nabla_X Y,
\end{equation}
\begin{equation}\label{eq: quat17}
(D_X \omega_R)Y := D_X (\omega_R Y) - \omega_R \nabla_X Y.
\end{equation}

Then the tensors $\phi_R$ and $\omega_R$ are said to be  {\em parallel} if $\nabla \phi_R = 0$ and $D \omega_R = 0$, respectively.

Throughout this paper, we will use the above notations.

We easily get

\begin{lemma}\label{eq: basic}
Let $M$ be a pointwise almost h-semi-slant submanifold of a hyperk\"{a}hler manifold $(\overline{M},I,J,K,g)$ such that $\{I,J,K\}$ is
a pointwise almost h-semi-slant basis on $\overline{M}$. Then we have

\begin{enumerate}
\item
\begin{align*}
  &(\nabla_X \phi_R) Y = A_{\omega_R Y} X + B_R h(X,Y),     \\
  &(D_X \omega_R) Y = -h(X,\phi_R Y) + C_R h(X,Y)
\end{align*}
for $X,Y\in \Gamma(TM)$ and $R\in \{ I,J,K \}$.

\item
\begin{align*}
  &-\phi_R A_Z X + B_R D_X Z = \nabla_X (B_R Z) - A_{C_R Z} X,    \\
  &-\omega_R A_Z X + C_R D_X Z = h(X,B_R Z) + D_X (C_R Z)
\end{align*}
for $X\in \Gamma(TM)$, $Z\in \Gamma(TM^{\perp})$,
and $R\in \{ I,J,K \}$.
\end{enumerate}
\end{lemma}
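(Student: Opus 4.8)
The plan is to derive all four identities by starting from the defining relations \eqref{eq: quat12} and \eqref{eq: quat13}, namely $RX = \phi_R X + \omega_R X$ and $RZ = B_R Z + C_R Z$, and then differentiating with the ambient connection $\overline{\nabla}$, which is parallel with respect to each $R\in\{I,J,K\}$ precisely because $(\overline{M},I,J,K,g)$ is hyperk\"{a}hler (so $\overline{\nabla}R = 0$). The key computational tool is that for $X,Y\in\Gamma(TM)$ we may expand $\overline{\nabla}_X(RY)$ in two ways: on one hand $\overline{\nabla}_X(RY) = R\,\overline{\nabla}_X Y$ by parallelism, and on the other hand we substitute the decomposition \eqref{eq: quat12} and apply the Gauss and Weingarten formulas \eqref{eq: gauss}, \eqref{eq: weing}.

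For part (1), I would compute $\overline{\nabla}_X(RY) = \overline{\nabla}_X(\phi_R Y + \omega_R Y)$. Applying Gauss to the $\phi_R Y$ term gives $\nabla_X(\phi_R Y) + h(X,\phi_R Y)$, and applying Weingarten to the $\omega_R Y$ term gives $-A_{\omega_R Y}X + D_X(\omega_R Y)$. On the other side, $R\,\overline{\nabla}_X Y = R(\nabla_X Y + h(X,Y)) = \phi_R \nabla_X Y + \omega_R \nabla_X Y + B_R h(X,Y) + C_R h(X,Y)$, using \eqref{eq: quat12} on the tangential piece $\nabla_X Y$ and \eqref{eq: quat13} on the normal piece $h(X,Y)$. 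Equating the two expressions and then separating the tangential components from the normal components yields exactly the two formulas of part (1), once the definitions \eqref{eq: quat16} and \eqref{eq: quat17} of $(\nabla_X\phi_R)Y$ and $(D_X\omega_R)Y$ are recognized in the resulting tangential and normal equations respectively.

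Part (2) is entirely parallel: I would differentiate the relation $RZ = B_R Z + C_R Z$ for a normal field $Z$. Computing $\overline{\nabla}_X(RZ)$ by applying Gauss to the tangential part $B_R Z$ and Weingarten to the normal part $C_R Z$, and separately computing $R\,\overline{\nabla}_X Z = R(-A_Z X + D_X Z)$ by applying \eqref{eq: quat12} to the tangential piece $-A_Z X$ and \eqref{eq: quat13} to the normal piece $D_X Z$, then equating and splitting into tangential and normal components, gives the two displayed identities of part (2).

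The argument is essentially bookkeeping, so there is no deep obstacle; the only point requiring care is keeping the four-way projection consistent, that is, correctly tracking which of $\phi_R, \omega_R, B_R, C_R$ applies to each tangential or normal summand and making sure the tangential/normal splitting is performed cleanly at the end. The decisive structural input is the parallelism $\overline{\nabla}R = 0$ on a hyperk\"{a}hler manifold, which is what makes the substitution $\overline{\nabla}_X(RY) = R\,\overline{\nabla}_X Y$ legitimate; on a merely quaternionic K\"{a}hler manifold the extra $\omega_{\alpha}$-terms from \eqref{eq: quat2} would appear and these clean formulas would acquire correction terms.
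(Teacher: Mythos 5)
Your proposal is correct and is exactly the computation the paper intends: the paper omits the proof with the remark ``We easily get,'' and the same expansion of $\overline{\nabla}_X(RY)=R\overline{\nabla}_XY$ via Gauss--Weingarten followed by a tangential/normal split appears verbatim inside the paper's proof of the totally umbilic lemma. Your closing observation about why the hyperk\"ahler hypothesis (rather than merely quaternionic K\"ahler) is needed is also accurate.
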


In a similar way to Proposition 3.6 of \cite{P3}, we obtain

\begin{proposition}\label{prop:slant}
Let $M$ be a pointwise almost h-semi-slant submanifold of an almost quaternionic Hermitian manifold $(\overline{M},E,g)$.
Then we get
\begin{equation}\label{eq: quat20}
\phi_R^2 X = -\cos^2 \theta_R \cdot X \quad \text{for} \ X\in
\Gamma(\mathcal{D}_2^R) \ \text{and} \ R\in \{ I,J,K \},
\end{equation}
where $\{ I,J,K \}$ is a pointwise almost h-semi-slant basis with the
almost h-semi-slant functions $\{ \theta_I,\theta_J,\theta_K \}$.
\end{proposition}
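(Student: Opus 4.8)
The plan is to mimic the classical slant-submanifold argument, where the defining angle condition is translated into an algebraic identity for $\phi_R^2$. Fix $R\in\{I,J,K\}$ and a nonzero $X\in\Gamma(\mathcal{D}_2^R)$. The key observation is that for such $X$ we have $\phi_R X\in\Gamma(\mathcal{D}_2^R)$ by \eqref{eq: quat150}, so $\phi_R X$ is the tangential part of $RX$ lying in $\mathcal{D}_2^R$, and the angle $\theta_R=\theta_R(X)$ between $RX$ and $(\mathcal{D}_2^R)_q$ measures exactly how much of $RX$ is captured by $\phi_R X$. The first step is therefore to write, using \eqref{eq: quat12},
\begin{equation*}
\cos\theta_R=\frac{\langle RX,\phi_R X\rangle}{|RX|\,|\phi_R X|}=\frac{|\phi_R X|}{|RX|},
\end{equation*}
where I use that $\langle RX,\phi_R X\rangle=\langle \phi_R X+\omega_R X,\phi_R X\rangle=|\phi_R X|^2$ since $\omega_R X\perp\phi_R X$, and that $\phi_R X$ is the orthogonal projection of $RX$ onto $T_qM$ (indeed onto $(\mathcal{D}_2^R)_q$). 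Because $R$ is an isometry by \eqref{eq: Herm1}, $|RX|=|X|$, giving $|\phi_R X|=\cos\theta_R\,|X|$.

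Next I would compute $\langle \phi_R^2 X,X\rangle$ directly. Using that $\phi_R$ is skew-adjoint on $T_qM$ (which follows from $R$ being skew-adjoint, $\langle RX,Y\rangle=-\langle X,RY\rangle$ by \eqref{eq: Herm1} together with $R^2=-\mathrm{id}$, after projecting tangentially), one gets
\begin{equation*}
\langle \phi_R^2 X,X\rangle=-\langle \phi_R X,\phi_R X\rangle=-|\phi_R X|^2=-\cos^2\theta_R\,|X|^2.
\end{equation*}
This already pins down the diagonal of $\phi_R^2$. To upgrade this to the operator identity $\phi_R^2 X=-\cos^2\theta_R\cdot X$ for all $X\in\Gamma(\mathcal{D}_2^R)$, the cleanest route is to show that $\phi_R^2$ restricted to $\mathcal{D}_2^R$ is a self-adjoint operator whose only eigenvalue is $-\cos^2\theta_R$: since $\phi_R$ is skew-adjoint, $\phi_R^2$ is symmetric and negative semidefinite, hence diagonalizable with real eigenvalues, and the constancy of $\theta_R$ over all nonzero $X\in(\mathcal{D}_2^R)_q$ forces every unit eigenvector to satisfy $\langle\phi_R^2 X,X\rangle=-\cos^2\theta_R$, so each eigenvalue equals $-\cos^2\theta_R$. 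A self-adjoint operator with a single eigenvalue $\lambda$ is $\lambda\cdot\mathrm{id}$, yielding the claim.

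The main obstacle is the justification that $\theta_R(X)$ is genuinely the angle realized by the orthogonal projection onto $(\mathcal{D}_2^R)_q$, i.e. that $\phi_R X$ is precisely that projection and that $\theta_R$ does not secretly depend on $X$ in a way that obstructs the eigenvalue argument. Here I would lean on the pointwise-slant hypothesis: by definition $\theta_R$ is constant in $X$ at each fixed $q$, which is exactly what makes $\langle\phi_R^2 X,X\rangle/|X|^2$ constant on the unit sphere of $(\mathcal{D}_2^R)_q$ and hence forces $\phi_R^2|_{\mathcal{D}_2^R}$ to be scalar. A subtlety worth checking is that $\phi_R X$ can vanish (when $\theta_R=\tfrac{\pi}{2}$), but the formula degenerates consistently to $\phi_R^2 X=0$, so the identity persists. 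Once the angle interpretation is secured, the remaining computation is the routine linear-algebra fact quoted above, and I would invoke the analogy with Proposition~3.6 of \cite{P3} to compress the details.
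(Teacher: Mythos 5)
Your argument is correct and is exactly the standard one this proposition rests on; the paper itself gives no proof, deferring to the analogous Proposition~3.6 of \cite{P3}, and what you wrote is the expected content of that reference: identify $\phi_R X$ as the orthogonal projection of $RX$ onto $(\mathcal{D}_2^R)_q$ (which uses that $RX$ has no $\mathcal{D}_1^R$-component, a consequence of the $R$-invariance of $\mathcal{D}_1^R$ and skew-adjointness of $R$), deduce $|\phi_R X|=\cos\theta_R\,|X|$, and then use the pointwise constancy of $\theta_R$ in $X$ together with self-adjointness of $\phi_R^2$ to polarize the quadratic identity $g(\phi_R^2X,X)=-\cos^2\theta_R\,g(X,X)$ into the operator identity. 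Your handling of the degenerate case $\theta_R=\tfrac{\pi}{2}$ is also fine, so there is nothing to add.
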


\begin{remark}
Since
\begin{equation}\label{eq: quat200}
g(\phi_R X, Y) = -g(X, \phi_R Y),
\end{equation}
by (\ref {eq: quat20}), we easily obtain
\begin{eqnarray}
& & g(\phi_R X, \phi_R Y) = \cos^2 \theta_R g(X, Y)   \label{eq: quat21}       \\
& & g(\omega_R X, \omega_R Y) = \sin^2 \theta_R g(X, Y)     \label{eq: quat22}
\end{eqnarray}
for $X,Y\in \Gamma(\mathcal{D}_2^R)$ and $R\in \{ I,J,K \}$.
\end{remark}

\begin{theorem}
Let $M$ be a pointwise h-semi-slant submanifold of a hyperk\"{a}hler manifold $(\overline{M},I,J,K,g)$ such that $\{I,J,K\}$ is
a pointwise h-semi-slant basis.
Then the following conditions are equivalent:

a) the complex distribution $\mathcal{D}_1$ is integrable.

b)
\begin{align*}
  &h(X, \phi_I Y) - h(Y, \phi_I X) + D_X (\omega_I Y) - D_Y (\omega_I X) = 0,     \\
  &Q_I\left(\nabla_X (\phi_I Y) - \nabla_Y (\phi_I X) + A_{\omega_I X} Y - A_{\omega_I Y} X\right) = 0
\end{align*}
for $X,Y\in \Gamma(\mathcal{D}_1).$

c)
\begin{align*}
  &h(X, \phi_J Y) - h(Y, \phi_J X) + D_X (\omega_J Y) - D_Y (\omega_J X) = 0,     \\
  &Q_J(\nabla_X (\phi_J Y) - \nabla_Y (\phi_J X) + A_{\omega_J X} Y - A_{\omega_J Y} X) = 0
\end{align*}
for $X,Y\in \Gamma(\mathcal{D}_1).$

d)
\begin{align*}
  &h(X, \phi_K Y) - h(Y, \phi_K X) + D_X (\omega_K Y) - D_Y (\omega_K X) = 0,     \\
  &Q_K(\nabla_X (\phi_K Y) - \nabla_Y (\phi_K X) + A_{\omega_K X} Y - A_{\omega_K Y} X) = 0
\end{align*}
for $X,Y\in \Gamma(\mathcal{D}_1).$
\end{theorem}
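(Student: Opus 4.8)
The plan is to establish the single equivalence (a) $\Leftrightarrow$ (b$_R$) for each $R\in\{I,J,K\}$ by one and the same argument, where (b$_R$) denotes the corresponding pair of displayed equations; running it for $R=I,J,K$ then yields (a) $\Leftrightarrow$ (b) $\Leftrightarrow$ (c) $\Leftrightarrow$ (d). Since $M$ is pointwise h-semi-slant we have $\mathcal{D}_1=\mathcal{D}_1^R$ for all three $R$, so integrability of $\mathcal{D}_1$ means $[X,Y]\in\Gamma(\mathcal{D}_1)$ for all $X,Y\in\Gamma(\mathcal{D}_1)$, equivalently $Q_R[X,Y]=0$. A preliminary simplification I would make: for $X,Y\in\Gamma(\mathcal{D}_1)$ we have $\omega_R X=\omega_R Y=0$ by \eqref{eq: quat150}, so the terms $D_X(\omega_R Y)$, $D_Y(\omega_R X)$, $A_{\omega_R X}Y$, $A_{\omega_R Y}X$ occurring in (b$_R$) all vanish, and (b$_R$) is equivalent to
\begin{align*}
  &h(X,\phi_R Y)-h(Y,\phi_R X)=0, \\
  &Q_R\!\left(\nabla_X(\phi_R Y)-\nabla_Y(\phi_R X)\right)=0.
\end{align*}

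The key computation exploits that $\overline{M}$ is hyperk\"ahler, so each $R$ is parallel and $\overline{\nabla}_X(RY)=R\,\overline{\nabla}_X Y$. For $Y\in\Gamma(\mathcal{D}_1)$ one has $RY=\phi_R Y\in\Gamma(\mathcal{D}_1)\subset\Gamma(TM)$; expanding the left-hand side by the Gauss formula and the right-hand side by the Gauss and Weingarten formulas together with the decompositions \eqref{eq: quat12}--\eqref{eq: quat13}, then subtracting the same identity with $X$ and $Y$ interchanged and using the symmetry of $h$ and $\nabla_X Y-\nabla_Y X=[X,Y]$, I would obtain the tangential and normal identities
\begin{align*}
  &\nabla_X(\phi_R Y)-\nabla_Y(\phi_R X)=\phi_R[X,Y], \\
  &h(X,\phi_R Y)-h(Y,\phi_R X)=\omega_R[X,Y].
\end{align*}
Writing $[X,Y]=P_R[X,Y]+Q_R[X,Y]$ and invoking $\omega_R\mathcal{D}_1=0$, $\phi_R\mathcal{D}_1=\mathcal{D}_1$ and $\phi_R\mathcal{D}_2^R\subset\mathcal{D}_2^R$ from \eqref{eq: quat150}, the normal identity becomes $h(X,\phi_R Y)-h(Y,\phi_R X)=\omega_R(Q_R[X,Y])$, while applying $Q_R$ to the tangential identity gives $Q_R(\nabla_X(\phi_R Y)-\nabla_Y(\phi_R X))=\phi_R(Q_R[X,Y])$.

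It remains to assemble the two conditions. Setting $W:=Q_R[X,Y]\in\Gamma(\mathcal{D}_2^R)$, the simplified form of (b$_R$) reads exactly $\omega_R W=0$ and $\phi_R W=0$, i.e. $RW=\phi_R W+\omega_R W=0$; since $R^2=-\mathrm{id}$ this forces $W=0$, that is $[X,Y]\in\Gamma(\mathcal{D}_1)$. Conversely, integrability gives $W=0$ directly, whence both components of $RW$ vanish and (b$_R$) holds. The one point demanding care — and the only step that is more than bookkeeping — is this final assembly: for an arbitrary slant angle neither $\phi_R$ nor $\omega_R$ is injective on $\mathcal{D}_2^R$, so neither displayed equation separately controls $W$. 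What makes the two \emph{together} equivalent to $W=0$ is precisely that they capture the full image $RW$ of the complex structure, which is injective. I would therefore present the conclusion through the vanishing of $RW$ rather than appealing to nondegeneracy of $\phi_R$ or $\omega_R$ individually, and note that the identical argument for $R=I,J,K$ closes the chain of equivalences.
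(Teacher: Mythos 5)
Your proposal is correct and follows essentially the same route as the paper: both compute $R[X,Y]=\overline{\nabla}_X(RY)-\overline{\nabla}_Y(RX)$ via the Gauss and Weingarten formulas and identify condition (b$_R$) with the vanishing of the normal part and of $Q_R$ applied to the tangential part of $R[X,Y]$, which by $R$-invariance of $\mathcal{D}_1$ is equivalent to $[X,Y]\in\Gamma(\mathcal{D}_1)$. Your version merely adds two harmless refinements the paper leaves implicit, namely that $\omega_R$ annihilates $\mathcal{D}_1$ (so several terms in (b$_R$) vanish identically) and the explicit justification that the two displayed equations jointly amount to $R\bigl(Q_R[X,Y]\bigr)=0$, hence $Q_R[X,Y]=0$.
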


\begin{proof}
Given $X,Y\in \Gamma(\mathcal{D}_1)$ and $R\in \{ I,J,K \}$, by using (\ref{eq: quat12}), (\ref{eq: gauss}), and (\ref{eq: weing}), we have
\begin{align*}
R[X,Y]&= R(\overline{\nabla}_X Y - \overline{\nabla}_Y X)    \\
      &= \overline{\nabla}_X (RY) - \overline{\nabla}_Y (RX)    \\
      &= \overline{\nabla}_X (\phi_R Y + \omega_R Y) - \overline{\nabla}_Y (\phi_R X + \omega_R X)    \\
      &= \nabla_X (\phi_R Y) + h(X, \phi_R Y) - A_{\omega_R Y} X + D_X (\omega_R Y)     \\
      &- \nabla_Y (\phi_R X) - h(Y, \phi_R X) + A_{\omega_R X} Y - D_Y (\omega_R X).
\end{align*}
Since $\mathcal{D}_1$ is $R$-invariant, we obtain
$$
a) \Leftrightarrow b), \quad a) \Leftrightarrow c), \quad a)
\Leftrightarrow d).
$$
Therefore, the result follows.
\end{proof}

\begin{theorem}
Let $M$ be a pointwise h-semi-slant submanifold of a hyperk\"{a}hler manifold $(\overline{M},I,J,K,g)$ such that $\{I,J,K\}$ is
a pointwise h-semi-slant basis.
Then the following conditions are equivalent:

a) the slant distribution $\mathcal{D}_2$ is integrable.

b) $P_I(\nabla_X (\phi_I Y) - \nabla_Y (\phi_I X) + A_{\omega_I X} Y - A_{\omega_I Y} X) = 0$ for $X,Y\in \Gamma(\mathcal{D}_2)$.

c) $P_J(\nabla_X (\phi_J Y) - \nabla_Y (\phi_J X) + A_{\omega_J X} Y - A_{\omega_J Y} X) = 0$ for $X,Y\in \Gamma(\mathcal{D}_2)$.

d) $P_K(\nabla_X (\phi_K Y) - \nabla_Y (\phi_K X) + A_{\omega_K X} Y - A_{\omega_K Y} X) = 0$ for $X,Y\in \Gamma(\mathcal{D}_2)$.
\end{theorem}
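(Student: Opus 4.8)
The plan is to reduce the integrability of $\mathcal{D}_2$ to the vanishing of a single projection and then reuse the bracket computation already carried out in the proof of the preceding theorem. Since $M$ is \emph{pointwise} h-semi-slant, the distributions $\mathcal{D}_1$ and $\mathcal{D}_2$ are independent of the choice of $R\in\{I,J,K\}$, so the projection $P_R$ onto $\mathcal{D}_1$ is the same map for all three $R$. Hence $\mathcal{D}_2$ is integrable precisely when $[X,Y]\in\Gamma(\mathcal{D}_2)$, i.e. $P_R[X,Y]=0$, for all $X,Y\in\Gamma(\mathcal{D}_2)$. This already indicates why (b), (c), (d) will be mutually equivalent: each is a rewriting, for a different member of the basis, of the same $R$-independent geometric statement $P[X,Y]=0$.

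First I would record, exactly as in the previous theorem via \ref{eq: quat12}, \ref{eq: gauss}, and \ref{eq: weing}, the identity valid for any $X,Y\in\Gamma(TM)$,
$$R[X,Y]=\nabla_X(\phi_R Y)-\nabla_Y(\phi_R X)+A_{\omega_R X}Y-A_{\omega_R Y}X+\big(h(X,\phi_R Y)-h(Y,\phi_R X)+D_X(\omega_R Y)-D_Y(\omega_R X)\big),$$
the first group being tangent and the second normal to $M$. Since $[X,Y]\in\Gamma(TM)$, the decomposition \ref{eq: quat12} gives $R[X,Y]=\phi_R[X,Y]+\omega_R[X,Y]$, and comparing tangential parts yields
$$\phi_R[X,Y]=\nabla_X(\phi_R Y)-\nabla_Y(\phi_R X)+A_{\omega_R X}Y-A_{\omega_R Y}X.$$

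Next I would apply $P_R$ to this identity. Writing $[X,Y]=P_R[X,Y]+Q_R[X,Y]$ and invoking \ref{eq: quat150}, namely $\phi_R\mathcal{D}_1^R=\mathcal{D}_1^R$ and $\phi_R\mathcal{D}_2^R\subset\mathcal{D}_2^R$, the vector $\phi_R P_R[X,Y]$ lies in $\mathcal{D}_1$ while $\phi_R Q_R[X,Y]$ lies in $\mathcal{D}_2$; hence $P_R(\phi_R[X,Y])=\phi_R P_R[X,Y]$ and therefore
$$\phi_R P_R[X,Y]=P_R\big(\nabla_X(\phi_R Y)-\nabla_Y(\phi_R X)+A_{\omega_R X}Y-A_{\omega_R Y}X\big).$$
The crucial observation is that $\phi_R$ restricts to an isomorphism of $\mathcal{D}_1$: on $\mathcal{D}_1$ one has $\omega_R=0$ by \ref{eq: quat150}, so \ref{eq: quat1500} collapses to $\phi_R^2=-id$ there, which makes $\phi_R|_{\mathcal{D}_1}$ invertible. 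Consequently $P_R[X,Y]=0$ if and only if $\phi_R P_R[X,Y]=0$, which by the displayed identity is exactly condition (b), (c), or (d) according as $R=I,J,K$. Combined with the first paragraph, this closes the chain $a)\Leftrightarrow b)$, $a)\Leftrightarrow c)$, $a)\Leftrightarrow d)$.

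The only genuinely delicate point is the invertibility of $\phi_R$ on $\mathcal{D}_1$, together with the commutation $P_R\phi_R=\phi_R P_R$, which holds because $\phi_R$ preserves the splitting $TM=\mathcal{D}_1\oplus\mathcal{D}_2$; both facts are immediate from \ref{eq: quat150}--\ref{eq: quat1500}. Beyond this I expect no serious obstacle: the argument is bookkeeping with the decomposition and with the order in which one takes tangential parts versus applying $P_R$, taking care to project \emph{before} discarding the normal component.
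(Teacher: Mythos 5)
Your proposal is correct and follows essentially the same route as the paper: both expand $R[X,Y]$ via the Gauss and Weingarten formulas and use the $R$-invariance of $\mathcal{D}_1$ (together with the fact that $\mathcal{D}_1$ is common to $I,J,K$ for a pointwise h-semi-slant basis) to reduce integrability of $\mathcal{D}_2$ to the stated projection conditions. The only cosmetic difference is that the paper tests $[X,Y]$ against $RZ$ for $Z\in\Gamma(\mathcal{D}_1)$ and moves $R$ across the metric, whereas you apply $P_R$ to the tangential part of $R[X,Y]$ and invoke the invertibility of $\phi_R$ on $\mathcal{D}_1$; these are equivalent formulations of the same step.
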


\begin{proof}
Given $X,Y\in \Gamma(\mathcal{D}_2)$, $Z\in \Gamma(\mathcal{D}_1)$,
and $R\in \{ I,J,K \}$, by using (\ref{eq: quat12}), (\ref{eq: gauss}), and (\ref{eq: weing}), we get
\begin{align*}
g([X,Y], RZ)&= -g(R[X,Y], Z)        \\
      &= -g(\overline{\nabla}_X (RY) - \overline{\nabla}_Y (RX), Z)    \\
      &= -g(\overline{\nabla}_X (\phi_R Y + \omega_R Y) - \overline{\nabla}_Y (\phi_R X + \omega_R X), Z)   \\
      &= -g(\nabla_X (\phi_R Y) - A_{\omega_R Y} X - \nabla_Y (\phi_R X) + A_{\omega_R X} Y, Z).
\end{align*}
Since $[X,Y]\in \Gamma(TM)$ and $\mathcal{D}_1$ is $R$-invariant, we have
$$
a) \Leftrightarrow b), \quad a) \Leftrightarrow c), \quad a)
\Leftrightarrow d).
$$
Therefore, we obtain the result.
\end{proof}

Now we introduce some equivalent conditions for distributions $\mathcal{D}_i$ to be totally geodesic foliations for $i\in \{1,2\}$.

\begin{theorem}
Let $M$ be a pointwise h-semi-slant submanifold of a hyperk\"{a}hler manifold $(\overline{M},I,J,K,g)$ such that $\{I,J,K\}$ is
a pointwise h-semi-slant basis and all the h-semi-slant functions $\theta_I, \theta_J, \theta_K$ are nowhere zero.
Then the following conditions are equivalent:

a) the complex distribution $\mathcal{D}_1$ defines a totally geodesic foliation.

b) $g(h(X, Y), \omega_I \phi_I Z) = g(h(X, IY), \omega_I Z)$ for $X,Y\in \Gamma(\mathcal{D}_1)$ and $Z\in \Gamma(\mathcal{D}_2)$.

c)  $g(h(X, Y), \omega_J \phi_J Z) = g(h(X, JY), \omega_J Z)$ for $X,Y\in \Gamma(\mathcal{D}_1)$ and $Z\in \Gamma(\mathcal{D}_2)$.

d)  $g(h(X, Y), \omega_K \phi_K Z) = g(h(X, KY), \omega_K Z)$ for $X,Y\in \Gamma(\mathcal{D}_1)$ and $Z\in \Gamma(\mathcal{D}_2)$.
\end{theorem}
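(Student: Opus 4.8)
The plan is to rephrase the totally geodesic foliation condition as a vanishing statement for $g(\nabla_X Y, Z)$, and then to isolate the second fundamental form by exploiting the parallelism of $R\in\{I,J,K\}$ (available because $\overline{M}$ is hyperk\"{a}hler) together with the slant identity of Proposition \ref{prop:slant}. First I would observe that $\mathcal{D}_1$ defines a totally geodesic foliation if and only if $\nabla_X Y\in\Gamma(\mathcal{D}_1)$ for all $X,Y\in\Gamma(\mathcal{D}_1)$; since $\mathcal{D}_2$ is the orthogonal complement of $\mathcal{D}_1$ in $TM$, this is equivalent to
\[
g(\nabla_X Y, Z)=0 \quad\text{for all } X,Y\in\Gamma(\mathcal{D}_1),\ Z\in\Gamma(\mathcal{D}_2).
\]
It therefore suffices, for each fixed $R\in\{I,J,K\}$, to derive a formula expressing $g(\nabla_X Y, Z)$ in terms of the quantities appearing in b), c), d).

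The core of the argument is a double application of the isometry \eqref{eq: Herm1} and the parallelism $\overline{\nabla}R=0$. Starting from $g(\nabla_X Y, Z)=g(\overline{\nabla}_X Y, Z)=g(\overline{\nabla}_X(RY), RZ)$, I would use that $\omega_R$ vanishes on the $R$-invariant distribution $\mathcal{D}_1$ to write $RY=\phi_R Y$, expand $RZ=\phi_R Z+\omega_R Z$ via \eqref{eq: quat12}, apply the Gauss and Weingarten formulas \eqref{eq: gauss}--\eqref{eq: weing}, and discard the tangent/normal cross terms (which vanish by orthogonality of $TM$ and $TM^{\perp}$) to obtain
\[
g(\nabla_X Y, Z)=g(\nabla_X(\phi_R Y), \phi_R Z)+g(h(X,\phi_R Y), \omega_R Z).
\]
Applying the same trick to the first summand on the right, and using $R\phi_R Y=R^2 Y=-Y$ together with $R\phi_R Z=\phi_R^2 Z+\omega_R\phi_R Z=-\cos^2\theta_R\,Z+\omega_R\phi_R Z$ from Proposition \ref{prop:slant}, I would arrive at
\[
g(\nabla_X(\phi_R Y),\phi_R Z)=\cos^2\theta_R\,g(\nabla_X Y, Z)-g(h(X,Y),\omega_R\phi_R Z).
\]
Combining the two displays and rearranging yields the key identity
\[
\sin^2\theta_R\,g(\nabla_X Y, Z)=g(h(X,\phi_R Y), \omega_R Z)-g(h(X,Y),\omega_R\phi_R Z).
\]

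Finally, since $\phi_R Y=RY$ on $\mathcal{D}_1$ and the hypothesis forces $\theta_R$ to be nowhere zero, $\sin^2\theta_R$ never vanishes, so I may divide to conclude that $g(\nabla_X Y, Z)=0$ for all admissible $X,Y,Z$ if and only if $g(h(X,RY),\omega_R Z)=g(h(X,Y),\omega_R\phi_R Z)$. Specializing $R=I,J,K$ then gives a)$\Leftrightarrow$b), a)$\Leftrightarrow$c), and a)$\Leftrightarrow$d), respectively. I expect the main obstacle to be the bookkeeping in the second application of the isometry: one must track carefully which summands of $RZ$ and of $R\phi_R Z$ lie in $TM$ versus $TM^{\perp}$, and feed the $\cos^2\theta_R$ term back correctly so that it recombines into $\sin^2\theta_R$. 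The nowhere-zero assumption on $\theta_R$ is precisely what legitimizes the final division and hence the clean equivalence.
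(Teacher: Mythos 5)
Your proposal is correct and follows essentially the same route as the paper: both exploit $\overline{\nabla}R=0$, the decomposition $RZ=\phi_RZ+\omega_RZ$, and the identity $\phi_R^2Z=-\cos^2\theta_R\,Z$ to arrive at the key relation $\sin^2\theta_R\,g(\nabla_XY,Z)=g(h(X,RY),\omega_RZ)-g(h(X,Y),\omega_R\phi_RZ)$, after which the nowhere-vanishing of $\theta_R$ gives the equivalences. The only cosmetic difference is that you split the computation into two explicit applications of the isometry while the paper performs it in a single chain using skew-symmetry of $R$.
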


\begin{proof}
Given $X,Y\in \Gamma(\mathcal{D}_1)$, $Z\in \Gamma(\mathcal{D}_2)$, and $R\in \{ I,J,K \}$,
by using (\ref{eq: quat12}), (\ref{eq: quat20}), and  (\ref{eq: gauss}),
we have
\begin{align*}
g(\nabla_X Y, Z) &= g(\overline{\nabla}_X RY, RZ)    \\
                 &= g(\overline{\nabla}_X RY, \phi_R Z + \omega_R Z)    \\
                 &= -g(\overline{\nabla}_X Y, R\phi_R Z) + g(\overline{\nabla}_X RY, \omega_R Z)   \\
                 &= \cos^2 \theta_R\cdot g(\nabla_X Y, Z) - g(h(X, Y), \omega_R \phi_R Z) + g(h(X, RY), \omega_R Z)
\end{align*}
so that
$$
\sin^2 \theta_R\cdot g(\nabla_X Y, Z) = -g(h(X, Y), \omega_R \phi_R Z) + g(h(X, RY), \omega_R Z).
$$
Hence,
$$
a) \Leftrightarrow b), \quad a) \Leftrightarrow c), \quad a)
\Leftrightarrow d).
$$
Therefore, the result follows.
\end{proof}

\begin{theorem}
Let $M$ be a pointwise h-semi-slant submanifold of a hyperk\"{a}hler manifold $(\overline{M},I,J,K,g)$ such that $\{I,J,K\}$ is
a pointwise h-semi-slant basis and all the h-semi-slant functions $\theta_I, \theta_J, \theta_K$ are nowhere zero.
Then the following conditions are equivalent:

a) the slant distribution $\mathcal{D}_2$ defines a totally geodesic foliation.

b) $g(\omega_I \phi_I W, h(Z, X)) = g(\omega_I W, h(Z, IX))$ for $X\in \Gamma(\mathcal{D}_1)$ and $Z,W\in \Gamma(\mathcal{D}_2)$.

c) $g(\omega_J \phi_J W, h(Z, X)) = g(\omega_J W, h(Z, JX))$ for $X\in \Gamma(\mathcal{D}_1)$ and $Z,W\in \Gamma(\mathcal{D}_2)$.

d) $g(\omega_K \phi_K W, h(Z, X)) = g(\omega_K W, h(Z, KX))$ for $X\in \Gamma(\mathcal{D}_1)$ and $Z,W\in \Gamma(\mathcal{D}_2)$.
\end{theorem}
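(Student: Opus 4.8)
The plan is to mirror the proof of the preceding theorem, reducing the totally geodesic condition on $\mathcal{D}_2$ to the vanishing of a single inner product. Since a distribution defines a totally geodesic foliation exactly when the leaves are totally geodesic, i.e. $\nabla_Z W \in \Gamma(\mathcal{D}_2)$ for all $Z,W\in \Gamma(\mathcal{D}_2)$, condition $a)$ is equivalent to $g(\nabla_Z W, X) = 0$ for all $Z,W\in \Gamma(\mathcal{D}_2)$ and $X\in \Gamma(\mathcal{D}_1)$. So I would fix such $Z,W,X$ together with a generator $R\in \{I,J,K\}$ and compute $g(\nabla_Z W, X) = g(\overline{\nabla}_Z W, X)$, aiming to show it is a nonzero multiple of $g(\omega_R \phi_R W, h(Z,X)) - g(\omega_R W, h(Z, RX))$.

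The key algebraic moves are as follows. First I would use that $\{I,J,K\}$ is parallel (hyperk\"ahler) and that $R$ is a $g$-isometry by \eqref{eq: Herm1}, to write $g(\overline{\nabla}_Z W, X) = g(R\overline{\nabla}_Z W, RX) = g(\overline{\nabla}_Z (RW), RX)$. Then, decomposing $RW = \phi_R W + \omega_R W$ via \eqref{eq: quat12} and noting that $R(\mathcal{D}_1)=\mathcal{D}_1$ forces $RX = \phi_R X \in \Gamma(\mathcal{D}_1)\subset \Gamma(TM)$ (so $\omega_R X = 0$), I would split into two pieces. The $\omega_R W$ piece is handled by the Weingarten formula \eqref{eq: weing}: since $RX$ is tangent, $g(\overline{\nabla}_Z (\omega_R W), RX) = -g(A_{\omega_R W} Z, RX) = -g(\omega_R W, h(Z, RX))$. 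For the $\phi_R W$ piece I would move $R$ back across the metric through $g(\,\cdot\,, RX) = -g(R\,\cdot\,, X)$ and parallelism, obtaining $-g(\overline{\nabla}_Z (R\phi_R W), X)$, and then substitute $R\phi_R W = \phi_R^2 W + \omega_R \phi_R W = -\cos^2\theta_R\, W + \omega_R \phi_R W$ from Proposition \ref{prop:slant}.

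The step that needs care — and the main thing distinguishing the pointwise case from the slant case — is that $\theta_R$ is a function rather than a constant, so differentiating $\cos^2\theta_R\, W$ produces the extra term $(Z\cos^2\theta_R)\,g(W,X)$. This is precisely where non-constancy could obstruct a clean identity, but it vanishes because $W\in \mathcal{D}_2$ and $X\in \mathcal{D}_1$ are orthogonal. The surviving normal contribution $\omega_R \phi_R W$ yields $g(\omega_R\phi_R W, h(Z,X))$ via \eqref{eq: weing}, and collecting the terms gives
\[
\sin^2\theta_R\cdot g(\nabla_Z W, X) = g(\omega_R\phi_R W, h(Z,X)) - g(\omega_R W, h(Z, RX)).
\]
Since $\theta_I,\theta_J,\theta_K$ are assumed nowhere zero, $\sin^2\theta_R$ is invertible, so $g(\nabla_Z W, X)=0$ is equivalent to the displayed identity, which is exactly $b)$, $c)$, $d)$ for $R=I,J,K$ respectively. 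This establishes $a)\Leftrightarrow b)$, $a)\Leftrightarrow c)$, and $a)\Leftrightarrow d)$, and the result follows.
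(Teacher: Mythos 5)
Your proof is correct and follows essentially the same route as the paper: both apply $R$ twice, split $RW=\phi_R W+\omega_R W$, use $\phi_R^2 W=-\cos^2\theta_R\,W$ on $\mathcal{D}_2$ together with the Gauss--Weingarten formulas, and arrive at the same key identity $\sin^2\theta_R\, g(\nabla_Z W,X)=g(\omega_R\phi_R W,h(Z,X))-g(\omega_R W,h(Z,RX))$, from which the equivalences follow since $\sin^2\theta_R$ is nowhere zero. The only cosmetic difference is that the paper first transfers the derivative onto $X$ via $g(\nabla_Z W,X)=-g(W,\nabla_Z X)$, so $\phi_R^2 W$ enters only algebraically and the $Z(\cos^2\theta_R)\,g(W,X)$ term you correctly observe to vanish by orthogonality never appears.
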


\begin{proof}
Given $X\in \Gamma(\mathcal{D}_1)$, $Z,W\in \Gamma(\mathcal{D}_2)$, and $R\in \{ I,J,K \}$,
by using (\ref{eq: quat12}), (\ref{eq: quat20}), and  (\ref{eq: gauss}),  we get
\begin{align*}
g(\nabla_Z W, X) &= -g(W, \nabla_Z X)      \\
                 &= -g(RW, \overline{\nabla}_Z RX)     \\
                 &= -g(\phi_R W + \omega_R W, \overline{\nabla}_Z RX)   \\
                 &= g(\phi^2_R W + \omega_R \phi_R W, \overline{\nabla}_Z X) - g(\omega_R W, \overline{\nabla}_Z RX)   \\
                 &= -\cos^2 \theta_R g(W, \nabla_Z X) + g(\omega_R \phi_R W, h(Z, X)) - g(\omega_R W, h(Z, RX))   \\
\end{align*}
so that
$$
\sin^2 \theta_R g(\nabla_Z W, X) = g(\omega_R \phi_R W, h(Z, X)) - g(\omega_R W, h(Z, RX)).
$$
Hence,
$$
a) \Leftrightarrow b), \quad a) \Leftrightarrow c), \quad a)
\Leftrightarrow d).
$$
Therefore, we obtain the result.
\end{proof}

\section{slant functions}\label{slant}

Like Proposition 2.1 of \cite{CG}, using (\ref{eq: quat21}) and (\ref{eq: quat150}), we easily obtain

\begin{proposition}
Let $M$ be a pointwise almost h-semi-slant submanifold of an almost quaternionic Hermitian manifold $(\overline{M},E,g)$.

Then
\begin{equation}\label{eq: quat33}
g(\phi_R X, \phi_R Y) = 0 \quad \text{whenever} \ g(X, Y) = 0
\end{equation}
for $X,Y\in \Gamma(TM)$ and $R\in \{ I,J,K \}$,
where $\{ I,J,K \}$ is a pointwise almost h-semi-slant basis.
\end{proposition}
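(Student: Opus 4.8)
The plan is to reduce the claim to the single identity (\ref{eq: quat21}), which already encodes the slant angle as a conformal factor. On the slant distribution $\mathcal{D}_2^R$ one has $g(\phi_R X,\phi_R Y)=\cos^2\theta_R\,g(X,Y)$, and the scalar $\cos^2\theta_R$ carries zero to zero: if $g(X,Y)=0$ then $g(\phi_R X,\phi_R Y)=\cos^2\theta_R\cdot 0=0$. This is precisely the mechanism of Proposition 2.1 of \cite{CG}, where the entire tangent space is slant. The role of (\ref{eq: quat150}) is to let the invariant and slant parts be handled independently.

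Concretely, I would fix $R\in\{I,J,K\}$, split $X=P_RX+Q_RX$ and $Y=P_RY+Q_RY$ via (\ref{eq: quat11}), and use $\phi_R\mathcal{D}_1^R=\mathcal{D}_1^R$ and $\phi_R\mathcal{D}_2^R\subset\mathcal{D}_2^R$ from (\ref{eq: quat150}) to see that $\phi_R X$ respects the orthogonal splitting, so that the cross terms between $\mathcal{D}_1^R$ and $\mathcal{D}_2^R$ drop out of $g(\phi_R X,\phi_R Y)$. On $\mathcal{D}_2^R$ I apply (\ref{eq: quat21}); on $\mathcal{D}_1^R$ the form $\omega_R$ vanishes by (\ref{eq: quat150}), so $\phi_R=R$ there and, $R$ being a $g$-isometry by (\ref{eq: Herm1}), $\phi_R$ again preserves inner products. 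Adding the two blocks yields $g(\phi_R X,\phi_R Y)=g(P_RX,P_RY)+\cos^2\theta_R\,g(Q_RX,Q_RY)$.

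The step I expect to be the genuine obstacle is the final combination. Orthogonality $g(X,Y)=g(P_RX,P_RY)+g(Q_RX,Q_RY)=0$ does not by itself force the weighted sum above to vanish when both distributions are present; for a vector with mixed components one is left with $-\sin^2\theta_R\,g(Q_RX,Q_RY)$, which need not be zero. I would therefore read the statement one distribution at a time---on the slant part $\mathcal{D}_2^R$ it is immediate from (\ref{eq: quat21}), exactly the \cite{CG} setting the reference invokes, and on the complex part $\mathcal{D}_1^R$ it follows from the isometry property of $R$---and I would verify the intended domain of $X,Y$ against the hypotheses before attempting the mixed case, since the conclusion as literally phrased for all of $\Gamma(TM)$ appears to require such a restriction.
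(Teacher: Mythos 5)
Your computation is sound and it follows exactly the route the paper indicates: the paper offers no written proof of this proposition beyond the sentence ``Like Proposition 2.1 of \cite{CG}, using (\ref{eq: quat21}) and (\ref{eq: quat150}), we easily obtain,'' and the ingredients you assemble --- the splitting (\ref{eq: quat11}), the fact from (\ref{eq: quat150}) that $\phi_R$ preserves the orthogonal decomposition $\mathcal{D}_1^R\oplus\mathcal{D}_2^R$, the identity $\phi_R=R$ on $\mathcal{D}_1^R$ combined with (\ref{eq: Herm1}), and (\ref{eq: quat21}) on $\mathcal{D}_2^R$ --- are precisely the ones the author has in mind. Your block formula $g(\phi_R X,\phi_R Y)=g(P_RX,P_RY)+\cos^2\theta_R\,g(Q_RX,Q_RY)$ is correct.

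The obstacle you flag at the end is genuine, and it is a defect of the proposition as literally stated rather than of your argument. If $e\in\Gamma(\mathcal{D}_1^R)$ and $v\in\Gamma(\mathcal{D}_2^R)$ are unit vector fields and $X=e+v$, $Y=e-v$, then $g(X,Y)=0$ while your formula gives $g(\phi_R X,\phi_R Y)=1-\cos^2\theta_R=\sin^2\theta_R$, which is nonzero wherever $\theta_R\neq 0$; so the conclusion cannot hold for arbitrary $X,Y\in\Gamma(TM)$ in a genuinely semi-slant situation. The statement must be read either with $X,Y\in\Gamma(\mathcal{D}_2^R)$ (the same domain on which (\ref{eq: quat21}) and (\ref{eq: quat22}) are asserted), or for pointwise almost h-slant submanifolds, where $\mathcal{D}_1^R=0$ and one is exactly in the setting of Proposition 2.1 of \cite{CG}. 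Your proposed restriction of the domain is the correct repair, and on either restricted domain your argument is complete; there is no further argument in the paper that would rescue the mixed case.
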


\begin{proposition}
Let $M$ be a pointwise almost h-semi-slant submanifold of an almost quaternionic Hermitian manifold $(\overline{M},E,g)$.
Then given any $C^{\infty}$-function $f$ on $\overline{M}$, the submanifold $M$ is also a pointwise almost h-semi-slant submanifold of an almost quaternionic Hermitian manifold $(\overline{M},E,\widetilde{g})$ with $\widetilde{g} = e^{2f}g$.
\end{proposition}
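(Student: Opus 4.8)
The plan is to exploit the conformal invariance of angles: a rescaling $\widetilde{g}=e^{2f}g$ multiplies all inner products at a point by the same positive scalar, so angles—and hence the slant functions—are unchanged. I would carry this out in three steps.

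\emph{Step 1 (the ambient structure survives).} First I would verify that $(\overline{M},E,\widetilde{g})$ is again an almost quaternionic Hermitian manifold. The subbundle $E$, its rank, and the quaternionic identities \eqref{eq: quat1} make no reference to the metric, so they are untouched. For the Hermitian compatibility \eqref{eq: Herm1} with respect to $\widetilde{g}$, I would take any local quaternionic Hermitian basis $\{I,J,K\}$ of $E$ and, for $R\in\{I,J,K\}$, compute
\[
\widetilde{g}(RX,RY)=e^{2f}g(RX,RY)=e^{2f}g(X,Y)=\widetilde{g}(X,Y),
\]
using \eqref{eq: Herm1} for $g$. Thus every quaternionic Hermitian basis for $g$ is one for $\widetilde{g}$.

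\emph{Step 2 (the splitting survives).} Next I would keep the same neighborhood $V$, open set $U$, local basis $\{I,J,K\}$, and distribution $\mathcal{D}_1^R$ as for $g$. The invariance condition $R(\mathcal{D}_1^R)=\mathcal{D}_1^R$ is metric-free, so it still holds. Since $\widetilde{g}(X,Y)=e^{2f}g(X,Y)$ vanishes exactly when $g(X,Y)$ does, the $\widetilde{g}$-orthogonal complement of $\mathcal{D}_1^R$ in $TM$ coincides with its $g$-orthogonal complement $\mathcal{D}_2^R$; in particular $TM=\mathcal{D}_1^R\oplus\mathcal{D}_2^R$ remains a $\widetilde{g}$-orthogonal decomposition, and the normal bundle $TM^\perp$ is unchanged.

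\emph{Step 3 (the angles survive).} Finally I would observe that at any point $q$ and any nonzero $u,v$,
\[
\frac{\widetilde{g}(u,v)}{\|u\|_{\widetilde{g}}\,\|v\|_{\widetilde{g}}}
=\frac{e^{2f(q)}g(u,v)}{e^{f(q)}\|u\|_{g}\cdot e^{f(q)}\|v\|_{g}}
=\frac{g(u,v)}{\|u\|_{g}\,\|v\|_{g}},
\]
so the cosine of the angle between two vectors is conformally invariant; taking the infimum over nonzero $v\in(\mathcal{D}_2^R)_q$ shows the angle between $RX$ and the subspace $(\mathcal{D}_2^R)_q$ is identical for $\widetilde{g}$ and $g$. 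Hence for nonzero $X\in(\mathcal{D}_2^R)_q$ the $\widetilde{g}$-angle equals $\theta_R(X)$, which is constant by hypothesis, and $M$ is a pointwise almost h-semi-slant submanifold of $(\overline{M},E,\widetilde{g})$ with the same almost h-semi-slant functions. The computation is essentially routine; the only point needing a little care is Step 2—confirming that $\mathcal{D}_2^R$, and therefore the projection $\phi_R$, are genuinely the same subobjects in both metrics, so that the angle is measured against the same space. Alternatively one can bypass the explicit angle argument through Proposition \ref{prop:slant}: as $\phi_R$ is the tangential projection of $R$ and both $TM$ and $TM^\perp$ are unchanged, $\phi_R$ is the same operator for $\widetilde{g}$, so \eqref{eq: quat20} forces the new slant functions to equal the old ones.
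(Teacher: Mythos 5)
Your proof is correct and follows essentially the same route as the paper's: the paper keeps the same basis and distributions and simply observes that $\cos^2\theta_R = g(\phi_R X,\phi_R X)/g(X,X)$ is unchanged when $g$ is replaced by $e^{2f}g$, which is exactly your Step 3 (and your alternative via \eqref{eq: quat20}). Your Steps 1 and 2 spell out details the paper leaves implicit --- that $\widetilde{g}$ is still quaternionic Hermitian and that $\mathcal{D}_2^R$, $TM^{\perp}$, and hence $\phi_R$ are unchanged under the conformal rescaling --- which only makes the argument more complete.
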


\begin{proof}
Since $M$ is a pointwise almost h-semi-slant submanifold of an almost quaternionic Hermitian manifold $(\overline{M},E,g)$,
given a point $p\in M$ with a neighborhood $V$, we can take an open set $U\subset \overline{M}$ with $U\cap M = V$ and
a  quaternionic Hermitian basis $\{ I,J,K \}$ of
sections of $E$ on $U$ such that for each $R\in \{ I,J,K \}$,
there is a distribution $\mathcal{D}_1^R \subset TM$ on $V$
such that
$$
TM =\mathcal{D}_1^R\oplus \mathcal{D}_2^R, \
R(\mathcal{D}_1^R)=\mathcal{D}_1^R,
$$
and at each given point $q\in V$ the angle $\theta_R=\theta_R(X)$ between $RX$ and the space
$(\mathcal{D}_2^R)_q$ is constant for nonzero $X\in
(\mathcal{D}_2^R)_q$, where $\mathcal{D}_2^R$ is the
orthogonal complement of $\mathcal{D}_1^R$ in $TM$.

Given $X\in (\mathcal{D}_2^R)_q$ and $R\in \{ I,J,K \}$, we have
$$
\cos^2 \theta_R = \frac{g(\phi_R X, \phi_R X)}{g(X, X)}
$$
so that
$$
\cos^2 \theta_R = \frac{e^{2f}g(\phi_R X, \phi_R X)}{e^{2f}g(X, X)} = \frac{\widetilde{g}(\phi_R X, \phi_R X)}{\widetilde{g}(X, X)},
$$
which means the result.
\end{proof}

In a similar way to Proposition 4.1 of \cite{CG}, we obtain

\begin{proposition}
Let $M$ be a pointwise almost h-slant submanifold of a hyperk\"{a}hler manifold $(\overline{M},I,J,K,g)$
such that $\{I,J,K\}$ is a pointwise almost h-slant basis.
Then given $R\in \{ I,J,K \}$, the almost h-slant function $\theta_R$ is constant on $M$ if and only if
$$
A_{\omega_R X} \phi_R X = A_{\omega_R \phi_R X} X \quad \text{for} \ X\in \Gamma(TM).
$$
\end{proposition}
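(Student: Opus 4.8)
The plan is to exploit the slant identity $\phi_R^2 X = -\cos^2\theta_R\cdot X$, which by Proposition \ref{prop:slant} holds here for \emph{all} $X\in\Gamma(TM)$, since in the pointwise almost h-slant case $\mathcal{D}_2^R = TM$. I would differentiate this relation covariantly in an arbitrary tangent direction $X$, expand $\nabla_X(\phi_R(\phi_R Y))$ by the product rule in the form (\ref{eq: quat16}), and apply the slant identity once more to rewrite $\phi_R^2\nabla_X Y$. This should yield the operator identity
\[
(\nabla_X\phi_R)(\phi_R Y) + \phi_R(\nabla_X\phi_R)Y = -X(\cos^2\theta_R)\,Y
\]
for $X,Y\in\Gamma(TM)$. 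The key feature of this step is that the direction $X$ of differentiation remains completely free, which is exactly what will let me recover a vector identity rather than merely a scalar one at the end.

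Next I would pair this identity with $Y$. Using the skew-symmetry (\ref{eq: quat200}), the two terms on the left combine into $g\big((\nabla_X\phi_R)(\phi_R Y),Y\big) - g\big((\nabla_X\phi_R)Y,\phi_R Y\big)$. I then substitute the first formula of Lemma \ref{eq: basic}, namely $(\nabla_X\phi_R)Y = A_{\omega_R Y}X + B_R h(X,Y)$, into both terms and convert everything into the second fundamental form via (\ref{eq: shape}) and the self-adjointness of the shape operator. The $B_R$-contributions are handled by $g(B_R Z, Y) = -g(Z,\omega_R Y)$, a consequence of (\ref{eq: quat13}) and the skew-symmetry of $R$, together with $R\phi_R Y = -\cos^2\theta_R\,Y + \omega_R\phi_R Y$ coming again from the slant identity, where the tangential term drops out against the normal vector $h(\cdot,\cdot)$. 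After collecting terms this should distill to the master identity
\[
2\,g\!\left(A_{\omega_R\phi_R Y}\,Y - A_{\omega_R Y}\phi_R Y,\ X\right) = -X(\cos^2\theta_R)\,\|Y\|^2,
\]
valid for all $X,Y\in\Gamma(TM)$.

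From this identity both implications follow. If $\theta_R$ is constant, then $X(\cos^2\theta_R)=0$ for every $X$, so the right-hand side vanishes identically; since $X$ ranges over all of $\Gamma(TM)$ and the bracketed expression is a fixed tangent vector for each $Y$, it must itself be zero, giving $A_{\omega_R\phi_R X}X = A_{\omega_R X}\phi_R X$ after renaming $Y$ to $X$. Conversely, if $A_{\omega_R X}\phi_R X = A_{\omega_R\phi_R X}X$ for all $X$, the left-hand side vanishes, so $X(\cos^2\theta_R)\,\|Y\|^2 = 0$; choosing $Y\neq 0$ forces $X(\cos^2\theta_R)=0$ for every $X$, whence $\cos^2\theta_R$, and therefore $\theta_R$, is constant on (connected) $M$. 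I expect the only real work to lie in the second paragraph—correctly routing each $A$-, $B_R$-, and $\omega_R\phi_R$-term through (\ref{eq: shape}) and the slant identity so that the $\pm\cos^2\theta_R$ pieces cancel and leave precisely the symmetric combination above. Once the master identity is in hand the equivalence itself is immediate, the one subtlety being that $X$ must be kept free in order to promote the scalar statement to the desired vector statement.
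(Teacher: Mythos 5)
Your proof is correct, and it is essentially the argument the paper intends: the paper gives no details, deferring to Proposition 4.1 of \cite{CG}, and your derivation (covariant differentiation of $\phi_R^2=-\cos^2\theta_R\,\mathrm{id}$ on all of $TM$, followed by pairing with $Y$ and converting via Lemma \ref{eq: basic} and (\ref{eq: shape}) into the identity $2\,g(A_{\omega_R\phi_R Y}Y-A_{\omega_R Y}\phi_R Y,X)=-X(\cos^2\theta_R)\,\|Y\|^2$) is exactly the Chen--Garay computation adapted to each $R\in\{I,J,K\}$. The signs all check out, and your remarks about keeping $X$ free and about connectedness of $M$ are the right caveats.
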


\begin{corollary}
Let $M$ be a pointwise almost h-slant submanifold of a hyperk\"{a}hler manifold $(\overline{M},I,J,K,g)$
such that $\{I,J,K\}$ is a pointwise almost h-slant basis.
Assume that $M$ is totally geodesic in $(\overline{M},g)$. Then the almost h-slant function $\theta_R$ is constant on $M$
for each $R\in \{ I,J,K \}$.
\end{corollary}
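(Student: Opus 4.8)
The plan is to deduce this directly from the Proposition immediately preceding it, whose criterion characterizes constancy of the almost h-slant function $\theta_R$. Since that Proposition is stated earlier and may be freely invoked, the only work is to verify that its hypothesis holds whenever $M$ is totally geodesic.

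First I would unwind the definition of totally geodesic. A submanifold $M$ is totally geodesic precisely when its second fundamental form vanishes identically, i.e. $h(X,Y)=0$ for all $X,Y\in\Gamma(TM)$. (This is the $H=0$, more precisely $h\equiv 0$, specialization; note that the definition of totally umbilic given in the excerpt reduces to totally geodesic exactly when $H=0$, but here I simply take $h\equiv 0$ as the defining property.) Next I would pass from $h$ to the shape operator via the relation
$$
\langle A_Z X, Y\rangle = \langle h(X,Y), Z\rangle
$$
from \eqref{eq: shape}. Since the right-hand side vanishes for every $X,Y\in\Gamma(TM)$ and every $Z\in\Gamma(TM^{\perp})$, and since $\langle\,\cdot\,,\,\cdot\,\rangle$ is nondegenerate on $TM$, it follows that $A_Z X=0$ for all tangent $X$ and all normal $Z$. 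In other words, the shape operator vanishes identically on $M$.

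Then I would apply this to the specific normal fields appearing in the Proposition's criterion. For any $X\in\Gamma(TM)$ and any $R\in\{I,J,K\}$, the vectors $\omega_R X$ and $\omega_R \phi_R X$ lie in $\Gamma(TM^{\perp})$ by \eqref{eq: quat12}, so both $A_{\omega_R X}\phi_R X$ and $A_{\omega_R \phi_R X}X$ are zero. Hence the identity
$$
A_{\omega_R X}\phi_R X = A_{\omega_R \phi_R X} X
$$
holds trivially for every $X\in\Gamma(TM)$. By the preceding Proposition, this is equivalent to $\theta_R$ being constant on $M$, and since $R\in\{I,J,K\}$ was arbitrary, the almost h-slant function $\theta_R$ is constant for each $R$, as claimed.

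I do not expect any genuine obstacle here: the result is a formal corollary, and the single substantive observation is that the totally geodesic condition $h\equiv 0$ forces $A\equiv 0$, which makes the Proposition's criterion hold automatically. The only point requiring a word of care is making explicit that $\omega_R X$ and $\omega_R\phi_R X$ are normal vector fields, so that the vanishing of $A$ genuinely applies to them; everything else is an immediate citation of the earlier Proposition.
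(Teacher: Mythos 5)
Your proposal is correct and follows exactly the route the paper intends: the corollary is stated without proof as an immediate consequence of the preceding Proposition, and your argument (totally geodesic gives $h\equiv 0$, hence $A\equiv 0$ by \eqref{eq: shape}, so the criterion $A_{\omega_R X}\phi_R X = A_{\omega_R \phi_R X} X$ holds trivially) is precisely the implicit reasoning. No gaps.
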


\section{Topological properties}\label{topol}

Let $M$ be a pointwise almost h-semi-slant submanifold of a hyperk\"{a}hler manifold $(\overline{M},I,J,K,g)$
such that $\{I,J,K\}$ is a pointwise almost h-semi-slant basis.
We call $M$ {\em proper} if $\theta_R(p) \in [0, \frac{\pi}{2})$ for $p\in M$ and $R\in \{ I,J,K \}$.

Let $M$ be a proper pointwise almost h-slant submanifold of a hyperk\"{a}hler manifold $(\overline{M},I,J,K,g)$
such that $\{I,J,K\}$ is a pointwise almost h-slant basis.

Define
\begin{equation}\label{eq: quat40}
\Omega_R (X, Y) := g(\phi_R X, Y)
\end{equation}
for $X,Y\in \Gamma(TM)$ and $R\in \{ I,J,K \}$.

By (\ref{eq: quat20}) and (\ref{eq: quat200}), $\Omega_R$ is a non-degenerate 2-form on $M$.

Using Lemma \ref{eq: basic}, we get

\begin{theorem}
Let $M$ be a proper pointwise almost h-slant submanifold of a hyperk\"{a}hler manifold $(\overline{M},I,J,K,g)$
such that $\{I,J,K\}$ is a pointwise almost h-slant basis.
Then the 2-form $\Omega_R$ is closed for each $R\in \{ I,J,K \}$.
\end{theorem}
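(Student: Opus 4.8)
The plan is to compute the exterior derivative $d\Omega_R$ directly from the invariant formula
$$d\Omega_R(X,Y,Z) = X\Omega_R(Y,Z) - Y\Omega_R(X,Z) + Z\Omega_R(X,Y) - \Omega_R([X,Y],Z) + \Omega_R([X,Z],Y) - \Omega_R([Y,Z],X)$$
for $X,Y,Z \in \Gamma(TM)$, and to show it vanishes identically. First I would rewrite each directional derivative $X\Omega_R(Y,Z) = X g(\phi_R Y, Z)$ using metric compatibility $\nabla g = 0$ together with the torsion-free identity $[X,Y]=\nabla_X Y - \nabla_Y X$. Since $\phi_R$ is skew-symmetric by (\ref{eq: quat200}), all terms in which $\phi_R$ is composed with a Levi-Civita derivative of $X,Y,Z$ should cancel in pairs against the bracket terms, leaving only the terms built from $(\nabla\phi_R)$. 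After also using that $\nabla_X\phi_R$ is itself skew-symmetric (obtained by differentiating (\ref{eq: quat200})), I expect to arrive at the clean cyclic expression
$$d\Omega_R(X,Y,Z) = g((\nabla_X\phi_R)Y, Z) + g((\nabla_Y\phi_R)Z, X) + g((\nabla_Z\phi_R)X, Y).$$

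Next I would invoke part (1) of Lemma \ref{eq: basic}, namely $(\nabla_X\phi_R)Y = A_{\omega_R Y}X + B_R h(X,Y)$, to rewrite each summand. Pairing with $Z\in\Gamma(TM)$ and applying (\ref{eq: shape}) gives $g(A_{\omega_R Y}X, Z) = g(h(X,Z), \omega_R Y)$. For the other piece, since $R$ is skew-adjoint with respect to $g$ (a consequence of (\ref{eq: Herm1}) and $R^2=-id$) and $C_R h(X,Y)\in\Gamma(TM^{\perp})$, I would show $g(B_R h(X,Y), Z) = g(R h(X,Y), Z) = -g(h(X,Y), RZ) = -g(h(X,Y), \omega_R Z)$, the last equality because $\phi_R Z$ is tangent while $h(X,Y)$ is normal. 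Thus each summand reduces to $g((\nabla_X\phi_R)Y, Z) = g(h(X,Z), \omega_R Y) - g(h(X,Y), \omega_R Z)$, an expression involving only $h$ and $\omega_R$.

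Finally, I would substitute this into the cyclic sum and cancel using the symmetry $h(X,Y)=h(Y,X)$: the term $g(h(X,Z),\omega_R Y)$ coming from the $X$-slot is annihilated by the term $-g(h(Z,X),\omega_R Y)$ coming from the $Z$-slot, and the remaining two pairs cancel in the same way, so the cyclic sum is identically zero and $\Omega_R$ is closed for each $R\in\{I,J,K\}$. I expect the only delicate step to be the reduction in the first paragraph: one must verify carefully that every Levi-Civita and bracket term cancels so that $d\Omega_R$ collapses to the cyclic $(\nabla\phi_R)$ sum, and this relies essentially on the skew-symmetry of both $\phi_R$ and $\nabla\phi_R$. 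Once that reduction is secured, the substitution from Lemma \ref{eq: basic} and the final symmetry cancellation are routine.
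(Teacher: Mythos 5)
Your proposal is correct and follows essentially the same route as the paper: reduce $d\Omega_R$ to the sum of $(\nabla\phi_R)$ terms (the paper writes it as $g((\nabla_X\phi_R)Y,Z)-g((\nabla_Y\phi_R)X,Z)+g((\nabla_Z\phi_R)X,Y)$, equivalent to your cyclic form via skew-symmetry of $\nabla\phi_R$), then substitute Lemma \ref{eq: basic}(1) and cancel using (\ref{eq: shape}) and the symmetry of $h$. No gaps.
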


\begin{proof}
Given $X,Y,Z\in \Gamma(TM)$ and $R\in \{ I,J,K \}$, we obtain
\begin{align*}
d\Omega_R (X,Y,Z) &= X\Omega_R (Y,Z) - Y\Omega_R (X,Z) + Z\Omega_R (X,Y)      \\
                  &- \Omega_R ([X,Y],Z) + \Omega_R ([X,Z],Y) - \Omega_R ([Y,Z],X)
\end{align*}
so that by (\ref{eq: quat150}) and (\ref{eq: quat16}),
\begin{align*}
d\Omega_R (X,Y,Z) &= g(\nabla_X \phi_R Y,Z) + g(\phi_R Y,\nabla_X Z) - g(\nabla_Y \phi_R X,Z)      \\
                  &- g(\phi_R X,\nabla_Y Z) + g(\nabla_Z \phi_R X,Y) + g(\phi_R X,\nabla_Z Y)  \\
                  &+ g([X,Y],\phi_R Z) - g([X,Z],\phi_R Y) + g([Y,Z],\phi_R X)      \\
                  &= g((\nabla_X \phi_R)Y,Z) - g((\nabla_Y \phi_R)X,Z) + g((\nabla_Z \phi_R)X,Y).      \\
\end{align*}
Using Lemma \ref{eq: basic} and (\ref{eq: shape}), we have
\begin{align*}
d\Omega_R (X,Y,Z) &= g(A_{\omega_R Y} X + B_R h(X,Y),Z) - g(A_{\omega_R X} Y + B_R h(Y,X),Z)     \\
                  &+ g(A_{\omega_R X} Z + B_R h(Z,X),Y)  \\
                  &= g(\omega_R Y,h(X,Z)) - g(h(X,Y),\omega_R Z)      \\
                  &- g(\omega_R X,h(Y,Z)) + g(h(Y,X),\omega_R Z)      \\
                  &+ g(\omega_R X,h(Z,Y)) - g(h(Z,X),\omega_R Y)      \\
                  &= 0.
\end{align*}
Therefore, we get the result.
\end{proof}

Denote by $[\Omega_R]$ the de Rham cohomology class of the 2-form $\Omega_R$ for $R\in \{ I,J,K \}$.
Then we obtain

\begin{theorem}\label{thm: coh}
Let $M$ be a $2n$-dimensional compact proper pointwise almost h-slant submanifold of a $4m$-dimensional hyperk\"{a}hler manifold $(\overline{M},I,J,K,g)$
such that $\{I,J,K\}$ is a pointwise almost h-slant basis.

Then
\begin{equation}\label{eq: quat41}
H^* (M, \mathbb{R}) \supseteq \widetilde{H},
\end{equation}
where $\widetilde{H}$ is the algebra spanned by $\{ [\Omega_I],[\Omega_J],[\Omega_K] \}$.
\end{theorem}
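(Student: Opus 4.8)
The plan is to exploit the non-degeneracy of each $\Omega_R$ together with the compactness of $M$ to show that the generators $[\Omega_I],[\Omega_J],[\Omega_K]$ and all of their wedge-powers up to the top degree are nonzero in $H^*(M,\mathbb{R})$; this makes $\widetilde{H}$ a genuine subalgebra of the de Rham cohomology ring and yields the asserted containment $(\ref{eq: quat41})$.

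First I would fix $R\in\{I,J,K\}$ and recall that by the preceding theorem $\Omega_R$ is closed, so it determines a class $[\Omega_R]\in H^2(M,\mathbb{R})$, while by $(\ref{eq: quat20})$ and $(\ref{eq: quat200})$ it is non-degenerate on the $2n$-dimensional manifold $M$. The central point is the pointwise linear-algebra fact that a non-degenerate alternating $2$-form on a $2n$-dimensional vector space has nonvanishing $n$-th exterior power; applying this at every point shows that the top wedge-power $\Omega_R^{\,n}:=\underbrace{\Omega_R\wedge\cdots\wedge\Omega_R}_{n}$ is a nowhere-zero $2n$-form, hence a volume form. In particular $\Omega_R^{\,n}$ orients $M$, so with that orientation $\int_M \Omega_R^{\,n}\neq 0$.

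Next I would pass to cohomology. Since $d\Omega_R=0$, the class $[\Omega_R]^n=[\Omega_R^{\,n}]$ is well defined in $H^{2n}(M,\mathbb{R})$, and on the compact manifold $M$ its nonvanishing is detected by $\int_M \Omega_R^{\,n}\neq 0$ via Stokes' theorem; therefore $[\Omega_R]^n\neq 0$. Consequently $[\Omega_R]^k\neq 0$ for every $1\le k\le n$, for if some intermediate power vanished then wedging with $[\Omega_R]^{\,n-k}$ would force $[\Omega_R]^n=0$, a contradiction. Running this over $R\in\{I,J,K\}$ shows that each generator and each of its powers survives in cohomology, so the algebra $\widetilde{H}$ generated by $[\Omega_I],[\Omega_J],[\Omega_K]$ is a nontrivial subalgebra of $H^*(M,\mathbb{R})$, establishing $(\ref{eq: quat41})$.

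The step I expect to require the most care is the passage from non-degeneracy to a volume form together with orientability: one must check that the purely pointwise non-degeneracy of $\Omega_R$ guaranteed by $(\ref{eq: quat20})$ and $(\ref{eq: quat200})$ genuinely forces $\Omega_R^{\,n}$ to be nowhere zero and hence to orient $M$, so that the integral is unambiguous and nonzero. Everything after that—closedness, Stokes, and the propagation of nonvanishing to the lower powers—is formal.
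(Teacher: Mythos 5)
Your argument is correct, but there is nothing in the paper to compare it against: Theorem \ref{thm: coh} is stated without proof, immediately after the result that each $\Omega_R$ is closed and the observation that $\Omega_R$ is non-degenerate (which holds because $M$ is \emph{proper}, so $\cos\theta_R$ is nowhere zero and $\phi_R^2=-\cos^2\theta_R\cdot id$ forces $\phi_R$ to be invertible). Your reconstruction supplies exactly the standard symplectic argument that the author evidently has in mind: $\Omega_R^{\,n}$ is a pointwise non-vanishing $2n$-form, hence a volume form on the compact manifold $M$, so $\int_M\Omega_R^{\,n}\neq 0$; by Stokes' theorem $[\Omega_R]^n\neq 0$, and therefore $[\Omega_R]^k\neq 0$ for all $1\le k\le n$. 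This non-vanishing is also what powers the subsequent corollaries (e.g.\ $H^2(S^{2n},\mathbb{R})=0$ obstructs such an immersion of the sphere).

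Two small remarks. First, as literally written the containment $\widetilde{H}\subseteq H^*(M,\mathbb{R})$ is automatic, since the classes $[\Omega_I],[\Omega_J],[\Omega_K]$ already lie in $H^*(M,\mathbb{R})$ and generate a subalgebra; the substantive content of the theorem is the non-triviality of $\widetilde{H}$, which is precisely what you prove. Second, your argument controls only the pure powers $[\Omega_R]^k$ and says nothing about mixed products such as $[\Omega_I]\cup[\Omega_J]$; the theorem as stated does not claim their non-vanishing, so nothing is missing, but a finer description of $\widetilde{H}$ (in the spirit of the results of Verbitsky and Huybrechts cited in the remark following the theorem) would require additional work.
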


\begin{remark}

\begin{enumerate}

\item Let $M$ be a $2n$-dimensional proper pointwise almost h-slant submanifold of a $4m$-dimensional hyperk\"{a}hler manifold $(\overline{M},I,J,K,g)$
such that $\{I,J,K\}$ is a pointwise almost h-slant basis.

If $\theta_R (p) = 0$ for $p\in M$ and $R\in \{ I,J,K \}$, then $M$ is clearly a hyperk\"{a}hler manifold with $n$ even so that we may call $M$
the {\em generalized hyperk\"{a}hler manifold}.

\item Like Theorem \ref{thm: coh}, there are many results on the cohomology of compact hyperk\"{a}hler manifolds to be known
(\cite{H}, \cite{V}, \cite{F}, \cite{E}, \cite{G3}, \cite{P0}).

It is very interesting to compare Theorem \ref{thm: coh} with 1.6 of \cite{H}.

\end{enumerate}

\end{remark}

\begin{corollary}
Every $2n$-sphere $S^{2n}$, $n\geq 2$, cannot be immersed in a hyperk\"{a}hler manifold as a proper pointwise almost h-slant submanifold.
\end{corollary}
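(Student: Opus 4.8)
The plan is to argue by contradiction, exploiting the fact that Theorem \ref{thm: coh} produces a genuine symplectic structure on $M$. Suppose, contrary to the assertion, that $S^{2n}$ with $n\geq 2$ admits an immersion into a hyperk\"{a}hler manifold $(\overline{M},I,J,K,g)$ realizing it as a proper pointwise almost h-slant submanifold. Since $S^{2n}$ is compact of dimension $2n$, Theorem \ref{thm: coh} applies verbatim and gives $H^*(S^{2n},\mathbb{R})\supseteq \widetilde{H}$, where $\widetilde{H}$ is the subalgebra spanned by the de Rham classes $[\Omega_I],[\Omega_J],[\Omega_K]$.

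The central observation is that each $\Omega_R$ is in fact a symplectic form on $S^{2n}$. It is closed by the theorem preceding Theorem \ref{thm: coh}, and it is non-degenerate by the remark recorded just before that theorem: using $\Omega_R(X,Y)=g(\phi_R X,Y)$ together with (\ref{eq: quat200}) and (\ref{eq: quat20}), the relation $\phi_R^2=-\cos^2\theta_R\cdot id$ and the inequality $\cos^2\theta_R>0$ (valid because properness forces $\theta_R\in[0,\tfrac{\pi}{2})$) show that $\Omega_R(X,\cdot)\equiv 0$ implies $\phi_R X=0$ and hence $X=0$. Consequently, on the compact $2n$-dimensional manifold $S^{2n}$ the top exterior power $\Omega_R^{\,n}$ is nowhere vanishing, i.e.\ a volume form, so $\int_{S^{2n}}\Omega_R^{\,n}\neq 0$. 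This yields $[\Omega_R]^n=[\Omega_R^{\,n}]\neq 0$ in $H^{2n}(S^{2n},\mathbb{R})$, and in particular $[\Omega_R]\neq 0$ in $H^2(S^{2n},\mathbb{R})$.

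To finish I would invoke the known cohomology of spheres: for $n\geq 2$ one has $H^2(S^{2n},\mathbb{R})=0$. This contradicts $[\Omega_R]\neq 0$, and the corollary follows. Note that the hypothesis $n\geq 2$ is exactly what is needed, since it is precisely the range in which the second Betti number of $S^{2n}$ vanishes; for $n=1$ the sphere $S^2$ does carry a symplectic form and no contradiction arises.

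The main obstacle is not the cohomological bookkeeping, which is routine, but making sure $\Omega_R$ is a well-defined \emph{global} $2$-form on the immersed sphere so that the symplectic argument is legitimate. I would address this by recalling that on a hyperk\"{a}hler manifold the basis $\{I,J,K\}$ is globally parallel, so the endomorphisms $\phi_R$, and hence the forms $\Omega_R=g(\phi_R\,\cdot\,,\,\cdot\,)$, are globally defined on $S^{2n}$; closedness and non-degeneracy are pointwise statements already established in the excerpt and therefore persist globally, and the volume-form computation is insensitive to whether the immersion is an embedding.
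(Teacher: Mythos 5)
Your proposal is correct and follows essentially the same route the paper intends: the corollary is meant to be read off from Theorem \ref{thm: coh} together with the observation (made explicitly in the paper just before that theorem) that each $\Omega_R$ is a closed non-degenerate $2$-form, so $[\Omega_R]^n\neq 0$ forces $H^2(M,\mathbb{R})\neq 0$, which fails for $S^{2n}$ when $n\geq 2$. Your additional care about non-degeneracy via $\phi_R^2=-\cos^2\theta_R\cdot id$ with $\cos^2\theta_R>0$, and about the global definition of $\Omega_R$ on a hyperk\"ahler target, matches the paper's setup exactly.
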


\begin{corollary}
Any $2n$-dimensional real projective space $\mathbb{R}\mathbb{P}^{2n}$, $n\geq 2$, cannot be immersed in a hyperk\"{a}hler manifold as a proper pointwise almost h-slant submanifold.
\end{corollary}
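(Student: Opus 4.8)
The plan is to argue by contradiction in exactly the same spirit as the preceding corollary for spheres, using Theorem \ref{thm: coh} together with a computation of the real cohomology of $\mathbb{R}\mathbb{P}^{2n}$. Suppose, to the contrary, that $\mathbb{R}\mathbb{P}^{2n}$ with $n\geq 2$ could be immersed in a hyperk\"{a}hler manifold $(\overline{M},I,J,K,g)$ as a proper pointwise almost h-slant submanifold $M$, with $\{I,J,K\}$ a pointwise almost h-slant basis. Since $\mathbb{R}\mathbb{P}^{2n}$ is a compact $2n$-dimensional manifold, Theorem \ref{thm: coh} applies and yields $H^*(M,\mathbb{R}) \supseteq \widetilde{H}$, where $\widetilde{H}$ is the subalgebra generated by the de Rham classes $[\Omega_I], [\Omega_J], [\Omega_K]$.

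The first step is to verify that each generator $[\Omega_R]$ is a nonzero class. By the theorem preceding Theorem \ref{thm: coh} each $\Omega_R$ is closed, and by the remark following (\ref{eq: quat40}) it is non-degenerate. Hence its $n$-fold wedge $\Omega_R\wedge\cdots\wedge\Omega_R$ ($n$ factors) is a nowhere-vanishing $2n$-form on the $2n$-dimensional manifold $M$; on a compact manifold such a top-degree form represents a nonzero class, so $[\Omega_R]^n = [\Omega_R\wedge\cdots\wedge\Omega_R] \neq 0$ in $H^{2n}(M,\mathbb{R})$. In particular $[\Omega_R]\neq 0$, whence $H^2(M,\mathbb{R})\neq 0$. (Equivalently, the existence of a nowhere-vanishing $2n$-form forces $M$ to be orientable.)

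The second step produces the contradiction from the topology of $M=\mathbb{R}\mathbb{P}^{2n}$. The integral homology of $\mathbb{R}\mathbb{P}^{2n}$ in positive degrees is either $2$-torsion or, in the top degree $2n$, trivial, the latter because $\mathbb{R}\mathbb{P}^{2n}$ is non-orientable. Tensoring with $\mathbb{R}$ annihilates all of this, so $H^k(\mathbb{R}\mathbb{P}^{2n},\mathbb{R}) = 0$ for every $k>0$ while $H^0(\mathbb{R}\mathbb{P}^{2n},\mathbb{R})=\mathbb{R}$. In particular $H^2(\mathbb{R}\mathbb{P}^{2n},\mathbb{R}) = 0$ (and $H^{2n}(\mathbb{R}\mathbb{P}^{2n},\mathbb{R})=0$ as well), which contradicts $H^2(M,\mathbb{R})\neq 0$ obtained above. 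Therefore no such immersion can exist.

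There is essentially no deep obstruction here; the proof is a direct corollary of Theorem \ref{thm: coh}. The only points that demand care are (i) justifying $[\Omega_R]^n\neq 0$, namely that a closed non-degenerate $2$-form on a compact manifold has nonzero top power in cohomology, and (ii) recording the real cohomology of $\mathbb{R}\mathbb{P}^{2n}$ correctly, where the non-orientability is the essential ingredient. I expect step (ii)---the clash between the nowhere-vanishing form $\Omega_R^{\,n}$ forcing orientability and the non-orientability of $\mathbb{R}\mathbb{P}^{2n}$---to be the real crux, and it in fact furnishes an even more immediate route to the contradiction than the vanishing of $H^2$.
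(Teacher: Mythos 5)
Your argument is correct and is exactly the route the paper intends: the corollary is stated without proof as an immediate consequence of Theorem \ref{thm: coh}, via the non-degeneracy and closedness of $\Omega_R$ forcing $[\Omega_R]\neq 0$ (indeed $[\Omega_R]^n\neq 0$), while $H^k(\mathbb{R}\mathbb{P}^{2n},\mathbb{R})=0$ for $k>0$. Your observation that the nowhere-vanishing top form $\Omega_R^{\,n}$ already contradicts the non-orientability of $\mathbb{R}\mathbb{P}^{2n}$ is a valid (and slightly more elementary) shortcut, but it is the same circle of ideas.
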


\begin{corollary}
Every $n$-dimensional complex projective space $\mathbb{C}\mathbb{P}^{n}$, $n\geq 2$, cannot be immersed in a hyperk\"{a}hler manifold as a proper pointwise almost h-slant submanifold.
\end{corollary}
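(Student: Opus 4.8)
The plan is to argue by contradiction using the topological obstruction of Theorem~\ref{thm: coh}. Suppose $\mathbb{CP}^n$ with $n\ge 2$ were immersed in a hyperk\"{a}hler manifold as a proper pointwise almost h-slant submanifold. Since $\mathbb{CP}^n$ is compact of real dimension $2n$, Theorem~\ref{thm: coh} applies and gives $H^*(\mathbb{CP}^n,\R)\supseteq \widetilde H$, where $\widetilde H$ is the subalgebra generated by $[\Omega_I],[\Omega_J],[\Omega_K]$. First I would record that, because the immersion is proper, each slant function satisfies $\theta_R<\frac{\pi}{2}$, so $\cos^2\theta_R>0$ and $\phi_R$ is invertible; hence by \eqref{eq: quat20} and \eqref{eq: quat200}, together with the theorem asserting $d\Omega_R=0$, each $\Omega_R$ is a symplectic form on $\mathbb{CP}^n$. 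In particular $[\Omega_R]^k\neq 0$ in $H^{2k}(\mathbb{CP}^n,\R)$ for every $0\le k\le n$.

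Next I would bring in the explicit ring $H^*(\mathbb{CP}^n,\R)\cong \R[x]/(x^{n+1})$ with $\deg x=2$, so that $H^2(\mathbb{CP}^n,\R)$ is one-dimensional. This is precisely where the situation departs from the $S^{2n}$ and $\mathbb{RP}^{2n}$ corollaries: there $H^2=0$, so the single nonzero class $[\Omega_R]$ already contradicts the existence of a symplectic form, whereas here $H^2\neq 0$ and one symplectic class is perfectly consistent. The contradiction must therefore be extracted from having all three classes simultaneously. The natural target is to prove that $[\Omega_I],[\Omega_J],[\Omega_K]$ are linearly independent, for then $\dim H^2(\mathbb{CP}^n,\R)\ge 3$, contradicting $\dim H^2(\mathbb{CP}^n,\R)=1$. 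The hypothesis $n\ge 2$ enters exactly here, since for $n=1$ the forms $\Omega_R$ are top-degree on a surface and hence automatically proportional, which is why $\mathbb{CP}^1=S^2$ is not excluded.

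To establish independence I would exploit the quaternionic relations $IJ=K=-JI$ (and cyclically) of the parallel ambient structure together with the pointwise linear independence of $I,J,K$, transferring this to the induced forms by pairing suitable products of the $\Omega_R$ against the fundamental class in $H^{2n}(\mathbb{CP}^n,\R)$, or by extracting a forced pointwise algebraic relation among $\Omega_I,\Omega_J,\Omega_K$ through the tensors $\phi_R,\omega_R,B_R,C_R$ of \eqref{eq: quat12}--\eqref{eq: quat13}.

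I expect this independence step to be the main obstacle. In the ambient hyperk\"{a}hler setting the independence of $[\omega_I],[\omega_J],[\omega_K]$ is immediate from parallelism (hence harmonicity) plus pointwise independence via Hodge theory, but the induced forms $\Omega_R$ on $M$ are neither parallel nor harmonic, so that Hodge-theoretic shortcut is unavailable. The crux is to supply a substitute that rules out any nontrivial relation $a[\Omega_I]+b[\Omega_J]+c[\Omega_K]=0$ inside the one-dimensional $H^2(\mathbb{CP}^n,\R)$, and it is this descent from the pointwise quaternionic identities to genuine cohomological independence that I anticipate will require the real work.
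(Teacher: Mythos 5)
The paper states this corollary (together with the $S^{2n}$ and $\mathbb{R}\mathbb{P}^{2n}$ ones) with no proof at all, so there is no official argument to measure you against; your plan is certainly the natural reading of how Theorem~\ref{thm: coh} is meant to be used, and your diagnosis is exactly right: non-degeneracy of $\phi_R$ (from properness and \eqref{eq: quat20}) plus closedness of $\Omega_R$ makes each $\Omega_R$ symplectic, which kills $S^{2n}$ and $\mathbb{R}\mathbb{P}^{2n}$ outright via $H^2=0$, but says nothing yet about $\mathbb{C}\mathbb{P}^n$, where one symplectic class is perfectly admissible. So the entire content of the corollary is concentrated in the independence statement you formulate, and that statement is precisely what your proposal does not prove. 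As written, this is a correct reduction of the problem, not a proof.

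Concerning the missing step itself: the tactics you sketch run into concrete obstructions. The quaternionic identity $K=IJ$ descends on $M$ only to $\phi_K=\phi_I\phi_J+B_I\omega_J$ (take tangential parts of $KX=I(\phi_JX+\omega_JX)$ using \eqref{eq: quat12}--\eqref{eq: quat13}), so the normal-valued operators contaminate any purely tangential relation among $\Omega_I,\Omega_J,\Omega_K$, and indeed $g(\phi_IX,\phi_JX)=-g(\omega_IX,\omega_JX)$ need not vanish. The cleanest candidate completion is to note that for a unit vector $(a,b,c)$ the combination $a\Omega_I+b\Omega_J+c\Omega_K$ equals $\Omega_{R'}$ for the parallel complex structure $R'=aI+bJ+cK$, and the closedness computation of Theorem 5.1 applies verbatim to $R'$; if each such $\Omega_{R'}$ were non-degenerate, every nonzero combination would be symplectic, hence $b_2(M)\geq 3>1=b_2(\mathbb{C}\mathbb{P}^n)$. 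But the definition of a proper pointwise almost h-slant submanifold constrains only the three basis structures $I,J,K$, not the whole $2$-sphere of structures $R'$, so non-degeneracy of $\phi_{R'}$ is not automatic --- and note that an unconditional independence of the three classes would force $b_2\geq 3$ and therefore also exclude $n=1$ (i.e.\ $S^2$), which the paper conspicuously does not claim. That is a warning sign that the step you defer is not a routine verification but the actual mathematical content, and it remains open in your proposal (and, for that matter, unsupplied in the paper).
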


\section{Warped product submanifolds}\label{warped}

In a similar way to Theorem 4.1 of \cite{S2}, we have

\begin{theorem}
Let $(\overline{M},I,J,K,g)$ be a hyperk\"{a}hler manifold. Then
given $R\in \{ I,J,K \}$, there do not exist any non-trivial
warped product submanifolds $M=B\times_f F$ of a K\"{a}hler
manifold $(\overline{M},R,g)$ such that $B$ is a proper pointwise
slant submanifold of $(\overline{M},R,g)$ and $F$ is a holomorphic submanifold of $(\overline{M},R,g)$.
\end{theorem}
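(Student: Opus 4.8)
The plan is to show that the warping function $f$ must be constant, contradicting the assumed non-triviality of $M=B\times_f F$; this follows the scheme of Theorem 4.1 of \cite{S2}. Since $(\overline{M},I,J,K,g)$ is hyperk\"ahler, the fixed structure $R$ is parallel, so $(\overline{M},R,g)$ is K\"ahler and $\overline{\nabla}_U(RV)=R\,\overline{\nabla}_U V$ for all $U,V$. Identifying $B$ with the proper pointwise slant factor (so $TB=\mathcal{D}_2^R$, with $\phi_R(TB)\subset TB$ and $\omega_R X\in\Gamma(TM^{\perp})$ for $X\in\Gamma(TB)$ by (\ref{eq: quat150})) and $F$ with the holomorphic factor (so $R(TF)=TF$, whence $\omega_R Z=0$ and $\phi_R Z=RZ\in\Gamma(TF)$ for $Z\in\Gamma(TF)$), the goal becomes to prove $X(\ln f)=0$ for every $X\in\Gamma(TB)$.

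First I would compute $g(\overline{\nabla}_Z Z,X)$ in two ways, for $X\in\Gamma(TB)$ and $Z\in\Gamma(TF)$. On the tangential side, using metric compatibility, the orthogonality $g(Z,X)=0$, and the warped product identity (\ref{eq: warp2}) in the form $\nabla_Z X=(X\ln f)Z$, one gets $g(\overline{\nabla}_Z Z,X)=g(\nabla_Z Z,X)=-(X\ln f)\,g(Z,Z)$. On the K\"ahler side, since $R$ is an isometry and parallel, $g(\overline{\nabla}_Z Z,X)=g(\overline{\nabla}_Z(RZ),RX)$; expanding $RX=\phi_R X+\omega_R X$ via (\ref{eq: quat12}) and $\overline{\nabla}_Z(RZ)$ by the Gauss formula (\ref{eq: gauss}), the tangential--normal cross terms drop, and the remaining tangential term $g(\nabla_Z(RZ),\phi_R X)$ vanishes because $\nabla_Z(\phi_R X)=(\phi_R X\ln f)Z$ is proportional to $Z$ while $g(RZ,Z)=0$. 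This leaves $g(\overline{\nabla}_Z Z,X)=g(h(Z,RZ),\omega_R X)$. Combining the two evaluations yields
\begin{equation*}
-(X\ln f)\,g(Z,Z)=g(h(Z,RZ),\omega_R X).
\end{equation*}

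The decisive step is then to replace $Z$ by $RZ$, which is again tangent to $F$. Since $R^2=-\mathrm{id}$, one has $h(RZ,R(RZ))=-h(Z,RZ)$ while $g(RZ,RZ)=g(Z,Z)$, so the same identity now reads $-(X\ln f)\,g(Z,Z)=-g(h(Z,RZ),\omega_R X)$. Adding the two relations gives $-2(X\ln f)\,g(Z,Z)=0$, and choosing $Z\neq 0$ forces $X(\ln f)=0$ for all $X\in\Gamma(TB)$. Hence $f$ is constant on $B$, so $M=B\times_f F$ is trivial, a contradiction, and no such warped product exists.

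I expect the main obstacle to be essentially bookkeeping rather than a deep idea: one must verify that every tangential--normal cross term and every term carrying the factor $g(RZ,Z)=0$ genuinely vanishes, so that the second fundamental form enters only through the single term $g(h(Z,RZ),\omega_R X)$ that the substitution $Z\mapsto RZ$ cancels. It is worth noting that the argument never uses properness of the slant factor, so the conclusion should hold in this form as stated; properness simply places the statement squarely within the proper pointwise slant framework developed above.
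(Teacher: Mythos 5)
Your proof is correct and follows exactly the argument the paper intends: the paper omits the proof, deferring to Theorem 4.1 of \cite{S2}, whose scheme is precisely your computation of $g(\overline{\nabla}_Z Z,X)$ in two ways followed by the substitution $Z\mapsto RZ$ to force $X(\ln f)=0$. All the claimed cancellations check out (in particular $\phi_R(TB)\subset TB$ follows from $R(TF)=TF$, and $g(RZ,Z)=0$ kills the remaining tangential term), so nothing further is needed.
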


\begin{proof}
Given $V\in \Gamma(TB)$, $X,Y\in \Gamma(TF)$, and $Z\in \Gamma(TM)$, we have
\begin{equation}\label{eq: prod01}
RZ = \phi Z + \omega Z,
\end{equation}
where $\phi Z\in \Gamma(TM)$ and $\omega Z\in \Gamma(TM^{\perp})$.

And we know that $\phi^2 V = -\cos^2 \theta V$ for a proper semi-slant
function $\theta$ on $M$ (i.e., $\theta : M \mapsto [0, \frac{\pi}{2})$).

Using (\ref {eq: warp2}), (\ref {eq: gauss}), (\ref {eq: weing}), (\ref {eq: shape}), we obtain
\begin{eqnarray*}
V(\ln f) g(X, Y) &=& -g(\overline{\nabla}_X (\phi^2 V + \omega\phi V), Y) - g(A_{\omega V} X, RY)   \\
                 &=& g(\overline{\nabla}_X (\cos^2 \theta V), Y) - g(\overline{\nabla}_X \omega\phi V, Y) - g(A_{\omega V} X, RY)   \\
                 &=& \cos^2 \theta g(\overline{\nabla}_X V, Y) + g(h(X, Y), \omega\phi V) - g(h(X, RY), \omega V)
\end{eqnarray*}
so that
\begin{equation}\label{eq: prod02}
\sin^2 \theta V(\ln f) g(X, Y) = g(h(X, Y), \omega\phi V) - g(h(X, RY), \omega V).
\end{equation}
Interchanging the role of $X$ and $Y$ at (\ref {eq: prod02}), we get
\begin{equation}\label{eq: prod03}
\sin^2 \theta V(\ln f) g(Y, X) = g(h(Y, X), \omega\phi V) - g(h(Y, RX), \omega V).
\end{equation}
Comparing (\ref {eq: prod02}) and (\ref {eq: prod03}), we have
\begin{equation}\label{eq: prod04}
g(h(X, RY), \omega V) = g(h(Y, RX), \omega V).
\end{equation}
But
\begin{eqnarray}
g(h(X, RY), \omega V) &=& g(\overline{\nabla}_X RY, RV - \phi V)  \label{eq: prod05}  \\
                 &=& g(\overline{\nabla}_X Y, V) + g(RY, \overline{\nabla}_X \phi V) \nonumber   \\
                 &=& -V(\ln f) g(X, Y) + \phi V(\ln f) g(X, RY).  \nonumber
\end{eqnarray}
From (\ref {eq: prod04}) and (\ref {eq: prod05}),
\begin{equation}\label{eq: prod06}
\phi V(\ln f) g(X, RY) = 0.
\end{equation}
Replacing $V$ by $\phi V$ and $X$ by $RX$ at (\ref {eq: prod06}), we obtain
\begin{equation}\label{eq: prod07}
\cos^2 \theta  V(\ln f) g(X, Y) = 0,
\end{equation}
which implies $V(\ln f) = 0$ so that $f$ is constant.
\end{proof}

\begin{corollary}
Let $(\overline{M},I,J,K,g)$ be a hyperk\"{a}hler manifold and
$M=B\times_f F$ a non-trivial warped product manifold of
Riemannian manifolds $(B,g_B)$ and $(F,g_F)$ with the warping
function $f$ on $B$. Then $M$ cannot be immersed in a
hyperk\"{a}hler manifold $(\overline{M},I,J,K,g)$ as a proper
pointwise h-semi-slant submanifold such that $TB = \mathcal{D}_2$,
$TF = \mathcal{D}_1$, and $\{ I,J,K \}$ is a pointwise
h-semi-slant basis.
\end{corollary}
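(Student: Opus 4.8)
The plan is to reduce the claim to the Theorem proved immediately above by fixing one of the three complex structures and reinterpreting the warped product relative to it. I would argue by contradiction, supposing that the non-trivial warped product $M = B \times_f F$ admits such an isometric immersion, and fix an arbitrary $R \in \{I, J, K\}$. Since $(\overline{M}, I, J, K, g)$ is hyperk\"{a}hler, $R$ is globally parallel, so $(\overline{M}, R, g)$ is a K\"{a}hler manifold and the single-structure hypotheses of the Theorem are meaningful.

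Next I would translate the pointwise h-semi-slant data into data adapted to $R$ alone. Because $TF = \mathcal{D}_1 = \mathcal{D}_1^R$ satisfies $R(\mathcal{D}_1) = \mathcal{D}_1$ by definition, the fiber $F$ is a holomorphic (invariant) submanifold of $(\overline{M}, R, g)$. On the base, $TB = \mathcal{D}_2 = \mathcal{D}_2^R$ is the slant distribution, so for nonzero $X \in (\mathcal{D}_2)_q$ the angle between $RX$ and $T_qB$ is exactly $\theta_R(q)$; this exhibits $B$ as a pointwise slant submanifold of $(\overline{M}, R, g)$ with slant function $\theta_R$, and properness of $M$ forces $\theta_R < \pi/2$. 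Provided $\theta_R$ is also nowhere zero, $B$ is a \emph{proper} pointwise slant submanifold, and then $M = B \times_f F$ is precisely a non-trivial warped product submanifold of the K\"{a}hler manifold $(\overline{M}, R, g)$ with proper pointwise slant base and holomorphic fiber, which the preceding Theorem forbids. This contradiction would establish the corollary.

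I expect the only delicate point to be the borderline case in which $\theta_R \equiv 0$ for every $R \in \{I, J, K\}$: then $TB = \mathcal{D}_2$ is invariant under all of $I, J, K$ and $B$ is not \emph{proper} slant, so the Theorem does not apply directly. Under the standard convention that a semi-slant distribution carries a genuinely nonzero slant angle this case is excluded outright; alternatively it can be handled separately, since a non-trivial warped product of two invariant submanifolds of a K\"{a}hler manifold forces the warping function to be constant by the warped product identity (\ref{eq: warp2}) together with the parallelism of $R$, again contradicting non-triviality. Apart from this borderline discussion, the corollary is a direct transcription of the Theorem and requires no new computation.
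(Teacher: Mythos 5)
Your reduction is exactly the intended argument: the paper offers no written proof of this corollary, presenting it as an immediate consequence of the preceding non-existence theorem, and your translation ($TF=\mathcal{D}_1$ is $R$-invariant, hence $F$ is holomorphic in $(\overline{M},R,g)$; $TB=\mathcal{D}_2$ makes $B$ pointwise slant with slant function $\theta_R<\pi/2$ by properness) is precisely that reduction. Your extra discussion of the borderline case $\theta_R\equiv 0$ goes beyond what the paper does and is sound, though note that under the paper's own convention ``proper'' already means only $\theta_R(p)\in[0,\pi/2)$, so whether that case needs separate treatment depends on which notion of ``proper pointwise slant'' one imports into the theorem.
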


\begin{example}
Let
$$
M := \{ (x_1,x_2,x_3,x_4,u,v) \mid 0< x_i < 1, 1\leq i \leq 4, 0 < u, v < \frac{\pi}{2} \}.
$$
Choose two points $P_1, P_2$ in the unit sphere $S^3$ such that
\begin{align*}
 &P_i = (a_{1i},a_{2i},a_{3i},a_{4i}), \quad  i = 1,2,     \\
 &\sum_{k=1}^4  a_{k1}a_{k2} = 0,  \\
 &-a_{11}a_{22} + a_{21}a_{12} - a_{31}a_{42} + a_{41}a_{32} \neq 0,      \\
 &-a_{11}a_{32} + a_{21}a_{42} + a_{31}a_{12} - a_{41}a_{22} \neq 0,     \\
 &-a_{11}a_{42} - a_{21}a_{32} + a_{31}a_{22} + a_{41}a_{12} \neq 0.  \\
\end{align*}
Define a map $i : M\subset \mathbb{R}^6 \mapsto \mathbb{R}^{20}$ by
\begin{align*}
 &i(x_1,x_2,x_3,x_4,u,v) = (y_1,y_2,\cdots,y_{20})    \\
 &= (x_1\cos u, x_2\cos u, x_3\cos u, x_4\cos u, x_1\cos v, x_2\cos v, x_3\cos v, x_4\cos v,   \\
 &x_1\sin u, x_2\sin u, x_3\sin u, x_4\sin u, x_1\sin v, x_2\sin v, x_3\sin v, x_4\sin v,       \\
 &a_{11}u + a_{12}v, a_{21}u + a_{22}v, a_{31}u + a_{32}v, a_{41}u + a_{42}v).     \\
\end{align*}
Consider a hyperk\"{a}hler structure $(I,J,K,\langle \ ,\ \rangle)$ on $\mathbb{R}^{20}$ (see Section 7).

Then the tangent bundle $TM$ (i.e., $i_* (TM)$) is generated by $X_1,X_2,X_3,X_4,Y_1,Y_2$,
where
\begin{align*}
 &X_1 = \cos u \tfrac{\partial}{\partial y_{1}} + \cos v \tfrac{\partial}{\partial y_{5}} +
 \sin u \tfrac{\partial}{\partial y_{9}} + \sin v \tfrac{\partial}{\partial y_{13}},    \\
 &X_2 = \cos u \tfrac{\partial}{\partial y_{2}} + \cos v \tfrac{\partial}{\partial y_{6}} +
 \sin u \tfrac{\partial}{\partial y_{10}} + \sin v \tfrac{\partial}{\partial y_{14}},    \\
 &X_3 = \cos u \tfrac{\partial}{\partial y_{3}} + \cos v \tfrac{\partial}{\partial y_{7}} +
 \sin u \tfrac{\partial}{\partial y_{11}} + \sin v \tfrac{\partial}{\partial y_{15}},    \\
 &X_4 = \cos u \tfrac{\partial}{\partial y_{4}} + \cos v \tfrac{\partial}{\partial y_{8}} +
 \sin u \tfrac{\partial}{\partial y_{12}} + \sin v \tfrac{\partial}{\partial y_{16}},    \\
 &Y_1 = -x_1\sin u \tfrac{\partial}{\partial y_{1}} -x_2 \sin u \tfrac{\partial}{\partial y_{2}} -
 x_3\sin u \tfrac{\partial}{\partial y_{3}} -x_4\sin u \tfrac{\partial}{\partial y_{4}}   \\
 &+ x_1\cos u \tfrac{\partial}{\partial y_{9}} + x_2\cos u \tfrac{\partial}{\partial y_{10}} +
 x_3\cos u \tfrac{\partial}{\partial y_{11}} + x_4\cos u \tfrac{\partial}{\partial y_{12}} \\
 &+ a_{11} \tfrac{\partial}{\partial y_{17}} + a_{21} \tfrac{\partial}{\partial y_{18}} +
 a_{31} \tfrac{\partial}{\partial y_{19}} + a_{41} \tfrac{\partial}{\partial y_{20}},   \\
 &Y_2 = -x_1\sin v \tfrac{\partial}{\partial y_{5}} -x_2 \sin v \tfrac{\partial}{\partial y_{6}} -
 x_3\sin v \tfrac{\partial}{\partial y_{7}} -x_4\sin v \tfrac{\partial}{\partial y_{8}}   \\
 &+ x_1\cos v \tfrac{\partial}{\partial y_{13}} + x_2\cos v \tfrac{\partial}{\partial y_{14}} +
 x_3\cos v \tfrac{\partial}{\partial y_{15}} + x_4\cos v \tfrac{\partial}{\partial y_{16}} \\
 &+ a_{12} \tfrac{\partial}{\partial y_{17}} + a_{22} \tfrac{\partial}{\partial y_{18}} +
 a_{32} \tfrac{\partial}{\partial y_{19}} + a_{42} \tfrac{\partial}{\partial y_{20}}.   \\
\end{align*}
It is easy to check that $M$ is a proper pointwise h-semi-slant submanifold of $(\mathbb{R}^{20},I,J,K,\langle \ ,\ \rangle)$ such that
$\mathcal{D}_1 = < X_1,X_2,X_3,X_4 >$, $\mathcal{D}_2 = < Y_1,Y_2 >$, and the h-semi-slant functions $\theta_I, \theta_J, \theta_K$ with
\begin{align*}
 &\cos \theta_I = \frac{|-a_{11}a_{22} + a_{21}a_{12} - a_{31}a_{42} + a_{41}a_{32}|}{1 + \sum_{k=1}^4 x_k^2},    \\
 &\cos \theta_J = \frac{|-a_{11}a_{32} + a_{21}a_{42} + a_{31}a_{12} - a_{41}a_{22}|}{1 + \sum_{k=1}^4 x_k^2},   \\
 &\cos \theta_K = \frac{|-a_{11}a_{42} - a_{21}a_{32} + a_{31}a_{22} + a_{41}a_{12}|}{1 + \sum_{k=1}^4 x_k^2}.
\end{align*}
Obviously, the distributions $\mathcal{D}_1$ and $\mathcal{D}_2$ are integrable so that we may denote by $B$ and $F$ the integral manifolds
of $\mathcal{D}_1$ and $\mathcal{D}_2$, respectively.

Then we can easily check that $M = (M, g)$ is a non-trivial warped product Riemannian submanifold of $\mathbb{R}^{20}$ such that
\begin{align*}
 &M = B \times_f F,    \\
 &g = 2(dx_1^2 + dx_2^2 + dx_3^2 + dx_4^2) + (1 + \sum_{k=1}^4 x_k^2)(du^2 + dv^2),   \\
 &\text{the warping function} \ f = \sqrt{1 + \sum_{k=1}^4 x_k^2}.
\end{align*}

Therefore, $M$ is a non-trivial warped product proper pointwise h-semi-slant submanifold of $(\mathbb{R}^{20},I,J,K,\langle \ ,\ \rangle)$.
\end{example}

Now we study non-trivial warped product proper pointwise h-semi-slant submanifolds of hyperk\"{a}hler manifolds.
Using these results, we will obtain Theorem \ref{thm: ineq33}.

\begin{lemma}
Let $M = B\times_f F$ be a non-trivial warped product proper pointwise h-semi-slant submanifold of a hyperk\"{a}hler manifold $(\overline{M},I,J,K,g)$
such that $TB = \mathcal{D}_1$, $TF = \mathcal{D}_2$, and $\{ I,J,K \}$ is a pointwise h-semi-slant basis.

Then we have
\begin{equation}\label{eq: quat50}
g(A_{\omega_R V} W, X) = g(A_{\omega_R W} V, X)
\end{equation}
for $V,W\in \Gamma(TF)$, $X\in \Gamma(TB)$, and $R\in \{ I,J,K \}$.
\end{lemma}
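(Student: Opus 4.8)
The plan is to reduce everything to the second fundamental form via the shape-operator relation (\ref{eq: shape}) and then to exploit the warped product structure through the vanishing of a single covariant derivative. First I would rewrite both sides of (\ref{eq: quat50}) using $g(A_Z U, X) = g(h(U,X), Z)$, so that the claim becomes the symmetry statement $g(h(W,X), \omega_R V) = g(h(V,X), \omega_R W)$ for $V,W \in \Gamma(TF)$ and $X \in \Gamma(TB)$. The whole argument therefore rests on controlling the mixed term $h(X,V)$, and the key intermediate identity I would aim for is
\[
A_{\omega_R V} X = -B_R h(X,V), \qquad X \in \Gamma(TB), \ V \in \Gamma(TF).
\]

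To obtain this identity I would invoke Lemma~\ref{eq: basic}(1), which gives $(\nabla_X \phi_R)V = A_{\omega_R V} X + B_R h(X,V)$, and show that its left-hand side vanishes here. Indeed, since $TF = \mathcal{D}_2$ and $\phi_R \mathcal{D}_2^R \subset \mathcal{D}_2^R$ by (\ref{eq: quat150}), the field $\phi_R V$ again lies in $\Gamma(TF)$, so the warped product formula (\ref{eq: warp2}) applies both to $\nabla_X V = (X\ln f) V$ and to $\nabla_X(\phi_R V) = (X\ln f)\phi_R V$, whence
\[
(\nabla_X \phi_R)V = \nabla_X(\phi_R V) - \phi_R \nabla_X V = (X\ln f)\phi_R V - (X\ln f)\phi_R V = 0.
\]
Combining this with Lemma~\ref{eq: basic}(1) delivers the displayed identity $A_{\omega_R V}X = -B_R h(X,V)$.

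With this in hand the computation closes quickly. Using the symmetry of the shape operator, the identity just derived, the fact that $B_R h(X,V)$ is the tangential component of $R\,h(X,V)$, and the skew-adjointness of $R$ following from (\ref{eq: Herm1}) and (\ref{eq: quat1}), I would compute
\[
g(A_{\omega_R V} W, X) = g(A_{\omega_R V} X, W) = -g(B_R h(X,V), W) = -g(R\,h(X,V), W) = g(h(X,V), RW);
\]
since $h(X,V)$ is normal, only the normal part $\omega_R W$ of $RW$ survives, giving $g(A_{\omega_R V}W, X) = g(h(V,X), \omega_R W) = g(A_{\omega_R W}V, X)$, which is exactly (\ref{eq: quat50}).

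The only real obstacle is the first step, namely recognizing that the warped product hypothesis forces $(\nabla_X\phi_R)V = 0$ for $X \in \Gamma(TB)$ and $V \in \Gamma(TF)$; for a general pointwise h-semi-slant submanifold Lemma~\ref{eq: basic}(1) yields only a tautology at this point, so it is precisely the product-type behaviour of $\nabla$ along $\mathcal{D}_1$ recorded in (\ref{eq: warp2}) that makes the argument go through. Everything after that is routine manipulation with the Gauss--Weingarten data and the Hermitian identities.
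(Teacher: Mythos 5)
Your proof is correct and follows essentially the same route as the paper's: both arguments reduce the claim to the intermediate identity $g(A_{\omega_R V}W,X)=g(h(X,V),\omega_R W)$ by applying the warped-product relation $\nabla_X V=(X\ln f)V$ to both $V$ and $\phi_R V$ and then using the skew-adjointness of $R$ (equivalently of $\phi_R$). The only difference is organizational: you package the Gauss--Weingarten expansion through Lemma~\ref{eq: basic}(1) via the clean observation $(\nabla_X\phi_R)V=0$, whereas the paper expands $\overline{\nabla}_X(\omega_R V)=\overline{\nabla}_X(RV-\phi_R V)$ inline and cancels the two $X(\ln f)$ terms at the end.
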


\begin{proof}
Given $V,W\in \Gamma(TF)$, $X\in \Gamma(TB)$, and $R\in \{ I,J,K \}$,
by using (\ref{eq: weing}), (\ref{eq: gauss}), (\ref{eq: quat150}), (\ref{eq: quat12}), (\ref{eq: warp2}), (\ref{eq: quat200}), and (\ref{eq: shape}),
we get
\begin{align*}
g(A_{\omega_R V} W, X) &= g(A_{\omega_R V} X, W)     \\
                  &= g(\nabla_X V, \phi_R W) + g(\overline{\nabla}_X V, \omega_R W) + g(\nabla_X \phi_R V, W)  \\
                  &= g(X(\ln f) V, \phi_R W) + g(h(X,V), \omega_R W) + g(X(\ln f) \phi_R V, W)      \\
                  &= g(A_{\omega_R W} V, X).
\end{align*}
\end{proof}

\begin{lemma}
Let $M = B\times_f F$ be a non-trivial warped product proper pointwise h-semi-slant submanifold of a hyperk\"{a}hler manifold $(\overline{M},I,J,K,g)$
such that $TB = \mathcal{D}_1$, $TF = \mathcal{D}_2$, and $\{ I,J,K \}$ is a pointwise h-semi-slant basis.

Then we obtain
\begin{equation}\label{eq: quat51}
g(A_{\omega_R \phi_R W} V, X) = -RX(\ln f) g(\phi_R W, V) - X(\ln f) \cos^2 \theta_R g(V, W)
\end{equation}
and
\begin{equation}\label{eq: quat52}
g(A_{\omega_R W} V, RX) = X(\ln f) g(W, V) + RX(\ln f) g(V, \phi_R W)
\end{equation}
for $V,W\in \Gamma(TF)$, $X\in \Gamma(TB)$, and $R\in \{ I,J,K \}$.
\end{lemma}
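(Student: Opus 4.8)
The plan is to extract both identities from a single \emph{master relation} obtained by computing $\overline{\nabla}_V(RX)$ in two different ways and then pairing the result against suitable tangent and normal fields. The crucial structural fact is that $TB=\mathcal{D}_1$ is $R$-invariant, so for $X\in\Gamma(TB)$ we have $\omega_R X=0$ and $RX=\phi_R X\in\Gamma(TB)$ by \eqref{eq: quat150}; this is what lets the warped-product identity \eqref{eq: warp2} act on $\phi_R X$ exactly as it does on $X$.

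First I would derive the master relation. Using the Gauss formula \eqref{eq: gauss} together with \eqref{eq: warp2} (recall $V\in\Gamma(TF)$, $X\in\Gamma(TB)$), we have $\overline{\nabla}_V X=(X\ln f)V+h(V,X)$; applying the parallel endomorphism $R$ (since $\overline{M}$ is hyperk\"ahler, $\overline{\nabla}_V(RY)=R\,\overline{\nabla}_V Y$) gives $\overline{\nabla}_V(RX)=(X\ln f)RV+R\,h(V,X)$. On the other hand, because $\phi_R X=RX\in\Gamma(TB)$, the same two formulas give $\overline{\nabla}_V(\phi_R X)=(RX\ln f)V+h(V,\phi_R X)$. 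Equating the two expressions yields the master relation $(X\ln f)RV+R\,h(V,X)=(RX\ln f)V+h(V,\phi_R X)$.

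Next I would pair this relation with the tangent field $\phi_R W$ to obtain \eqref{eq: quat51}: the term $g(RV,\phi_R W)$ collapses to $\cos^2\theta_R\,g(V,W)$ by \eqref{eq: quat21}; the term $g(R\,h(V,X),\phi_R W)$ is rewritten as $-g(h(V,X),R\phi_R W)$ via the skew identity $g(RA,C)=-g(A,RC)$ (a consequence of \eqref{eq: Herm1}), and since $R\phi_R W=\phi_R^2 W+\omega_R\phi_R W=-\cos^2\theta_R\,W+\omega_R\phi_R W$ by \eqref{eq: quat20}, only the normal part survives and produces a multiple of $g(A_{\omega_R\phi_R W}V,X)$ through \eqref{eq: shape}, while on the right $g(h(V,\phi_R X),\phi_R W)=0$ by $TM\perp TM^{\perp}$ and $g(V,\phi_R W)$ carries the warping factor $RX\ln f$. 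Collecting terms gives \eqref{eq: quat51}. Pairing the same master relation instead with the normal field $\omega_R W$, and using $g(RV,\omega_R W)=\sin^2\theta_R\,g(V,W)$ from \eqref{eq: quat22} together with $g(V,\omega_R W)=0$, yields the companion relation $g(A_{\omega_R W}V,RX)=(X\ln f)\sin^2\theta_R\,g(V,W)+g(A_{\omega_R\phi_R W}V,X)$; substituting \eqref{eq: quat51} and invoking $\sin^2\theta_R+\cos^2\theta_R=1$ then produces \eqref{eq: quat52}, the full factor $g(W,V)$ in its first term being exactly this Pythagorean combination.

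The main obstacle is the bookkeeping of tangential versus normal projections: at each pairing one must split $RV$, $R\phi_R W$, and $R\,h(V,X)$ into their $\phi_R$/$\omega_R$ (resp. $B_R$/$C_R$) parts, discard the pieces killed by the orthogonality of $TM$ and $TM^{\perp}$, and apply $\phi_R^2=-\cos^2\theta_R\,\mathrm{id}$ only on $\mathcal{D}_2=TF$ while using $\omega_R=0$, $\phi_R=R$ on $\mathcal{D}_1=TB$. The one genuinely delicate point is the consistent use of the skew-symmetry $g(RA,C)=-g(A,RC)$, since a single sign slip there propagates through both identities and through the $\sin^2\theta_R+\cos^2\theta_R$ cancellation.
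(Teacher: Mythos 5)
Your route is genuinely different from the paper's. The paper first proves the symmetry \eqref{eq: quat50}, uses it to trade $A_{\omega_R\phi_R W}V$ for $A_{\omega_R V}\phi_R W$, and then evaluates $-g(\overline{\nabla}_{\phi_R W}(RV-\phi_R V),X)$ via the Weingarten formula \eqref{eq: weing} together with \eqref{eq: warp2}; you instead differentiate $RX$ along $V$ and pair the resulting master relation against $\phi_R W$ and $\omega_R W$. The strategy is legitimate, but the computation you describe does not land on the stated formulas. Carrying out your pairing with $\phi_R W$ exactly as you prescribe gives
\[
g(A_{\omega_R\phi_R W}V,X)=X(\ln f)\cos^2\theta_R\,g(V,W)-RX(\ln f)\,g(V,\phi_R W),
\]
whose $\cos^2\theta_R$ term has the \emph{opposite} sign to \eqref{eq: quat51}: the term $(X\ln f)g(RV,\phi_R W)=(X\ln f)\cos^2\theta_R g(V,W)$ sits on the same side of your master relation as $-g(A_{\omega_R\phi_R W}V,X)$, so after moving it across it enters with a plus. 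Your own second step betrays the problem: the cancellation $\sin^2\theta_R+\cos^2\theta_R=1$ that you invoke to produce the clean factor $g(W,V)$ in \eqref{eq: quat52} works only if the $\cos^2\theta_R$ term of \eqref{eq: quat51} carries a plus sign; substituting \eqref{eq: quat51} as printed into your companion relation yields $(\sin^2\theta_R-\cos^2\theta_R)X(\ln f)g(V,W)$ rather than $X(\ln f)g(W,V)$. So the proposal cannot simultaneously deliver \eqref{eq: quat51} and \eqref{eq: quat52} as stated; as written it is internally inconsistent, and ``collecting terms'' does not close the gap.

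The whole discrepancy is the substitution $g(V,\phi_R W)\leftrightarrow g(\phi_R V,W)=-g(V,\phi_R W)$, and you should locate its source before deciding which sign to fix. The paper reaches its signs through \eqref{eq: quat50}, whose proof applies \eqref{eq: warp2} to $\nabla_X(\phi_R V)$; but $\phi_R V$ is a section of $TF$ that need not be the lift of a vector field on $F$, so $\nabla_X(\phi_R V)=(X\ln f)\phi_R V+[X,\phi_R V]$ with a vertical bracket term that is not visibly zero. Your steps, by contrast, only ever use $\nabla_V X=(X\ln f)V$, which is tensorial in the vertical slot and hence safe for arbitrary vertical fields (the horizontal correction hidden in $\nabla_V(RX)$ is annihilated by both of your pairings). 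You therefore need to reconcile your master relation with \eqref{eq: quat50} --- for instance by testing a concrete warped product with $\phi_R|_{TF}\neq0$, since the two versions coincide exactly when $\phi_R|_{TF}=0$ --- and state explicitly which sign survives; until that is done the proposal does not constitute a proof of the lemma as printed.
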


\begin{proof}
Given $V,W\in \Gamma(TF)$, $X\in \Gamma(TB)$, and $R\in \{ I,J,K \}$,
by using (\ref{eq: quat150}), (\ref{eq: quat50}), (\ref{eq: weing}), (\ref{eq: quat12}), (\ref{eq: gauss}), (\ref{eq: warp2}), and (\ref{eq: quat21}),
we have
\begin{align*}
g(A_{\omega_R \phi_R W} V, X) &= g(A_{\omega_R V} \phi_R W, X)     \\
                  &= -g(\overline{\nabla}_{\phi_R W} (RV - \phi_R V), X)  \\
                  &= g(\nabla_{\phi_R W} V, RX) + g(\nabla_{\phi_R W} \phi_R V, X)     \\
                  &= -g(V, RX(\ln f) \phi_R W) - g(\phi_R V, X(\ln f) \phi_R W)     \\
                  &= -RX(\ln f) g(\phi_R W, V) - X(\ln f) \cos^2 \theta_R g(V, W).
\end{align*}
Substituting $\phi_R W$ and $X$ by $W$ and $RX$, respectively, at (\ref{eq: quat51}),
we easily get (\ref{eq: quat52}).
\end{proof}

\begin{lemma}
Let $M = B\times_f F$ be a non-trivial warped product proper pointwise h-semi-slant submanifold of a hyperk\"{a}hler manifold $(\overline{M},I,J,K,g)$
such that $TB = \mathcal{D}_1$, $TF = \mathcal{D}_2$, and $\{ I,J,K \}$ is a pointwise h-semi-slant basis.

Then we obtain
\begin{equation}\label{eq: quat53}
g(h(X,Y), \omega_R V) = 0
\end{equation}
and
\begin{equation}\label{eq: quat54}
g(h(X,V), \omega_R W) = -RX(\ln f) g(V, W) + X(\ln f) g(V, \phi_R W)
\end{equation}
for $X,Y\in \Gamma(TB)$, $V,W\in \Gamma(TF)$, and $R\in \{ I,J,K \}$.
\end{lemma}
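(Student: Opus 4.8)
The plan is to reduce both identities to short Gauss--Weingarten computations, exploiting that $\mathcal{D}_1 = TB$ is $R$-invariant and that the warped-product structure forces $\nabla_X Y \in \Gamma(TB)$ for $X,Y \in \Gamma(TB)$. The latter fact I would first record, as it is used repeatedly: it is immediate from (\ref{eq: warp2}), since for $V \in \Gamma(TF)$ one has $g(\nabla_X Y, V) = X g(Y,V) - g(Y, \nabla_X V) = -g(Y, (X\ln f)V) = 0$ because $TB \perp TF$, so $\nabla_X Y$ has no $TF$-component.

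For (\ref{eq: quat53}), I would begin by noting that $g(h(X,Y), \omega_R V) = g(\overline{\nabla}_X Y, \omega_R V)$, because $\overline{\nabla}_X Y = \nabla_X Y + h(X,Y)$ by (\ref{eq: gauss}) and $\nabla_X Y \in \Gamma(TM)$ is orthogonal to the normal field $\omega_R V$. Writing $\omega_R V = RV - \phi_R V$ via (\ref{eq: quat12}), the $\phi_R V$-term drops out: by (\ref{eq: quat150}) we have $\phi_R V \in \Gamma(\mathcal{D}_2) = \Gamma(TF)$, which is tangent (hence orthogonal to $h(X,Y)$) and orthogonal to $\nabla_X Y \in \Gamma(TB)$. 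For the surviving term I would use that $R$ is $g$-skew (a consequence of (\ref{eq: Herm1})) and $\overline{\nabla}$-parallel (hyperk\"ahler), so $g(\overline{\nabla}_X Y, RV) = -g(\overline{\nabla}_X(RY), V)$; since $RY \in \Gamma(TB)$ by $R$-invariance of $\mathcal{D}_1$, the same totally-geodesic-base argument gives $\overline{\nabla}_X(RY) = \nabla_X(RY) + h(X,RY)$ with $\nabla_X(RY) \in \Gamma(TB) \perp V$ and $h(X,RY) \perp V$. Hence every term vanishes and (\ref{eq: quat53}) follows.

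For (\ref{eq: quat54}) I would not compute directly but deduce it from the already-established (\ref{eq: quat52}) by a substitution. Since $\mathcal{D}_1 = TB$ is $R$-invariant and $R$ is invertible on it with $R^{-1} = -R$, replacing $X$ by $-RX$ in (\ref{eq: quat52}) turns the slot $RX$ there into $R(-RX) = X$, while $X(\ln f)$ becomes $-RX(\ln f)$ and $RX(\ln f)$ becomes $X(\ln f)$. This yields $g(A_{\omega_R W}V, X) = -RX(\ln f)\, g(W,V) + X(\ln f)\, g(V, \phi_R W)$, and finally $g(A_{\omega_R W}V, X) = g(h(X,V), \omega_R W)$ by (\ref{eq: shape}) together with the symmetry of $h$, which is precisely (\ref{eq: quat54}).

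The computations are brief once the correct reductions are selected; the only care needed is bookkeeping the tangential/normal and $\mathcal{D}_1$/$\mathcal{D}_2$ decompositions so that exactly the right terms vanish, and confirming $\nabla_X Y \in \Gamma(TB)$. I do not anticipate any genuine obstacle: the first identity is a direct Gauss--Weingarten computation leveraging the $R$-invariance of $\mathcal{D}_1$, and the second is a purely algebraic consequence of (\ref{eq: quat52}).
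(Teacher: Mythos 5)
Your proof is correct and follows essentially the same route as the paper's: for (\ref{eq: quat53}) you expand $\omega_R V = RV - \phi_R V$ and kill each term via the parallelism and skewness of $R$ together with the warped-product relation (\ref{eq: warp2}) (the paper applies (\ref{eq: warp2}) directly where you first package it as ``$\nabla_X Y\in\Gamma(TB)$'', which is the same computation), and for (\ref{eq: quat54}) you perform the same substitution of $X$ by $\pm RX$ in (\ref{eq: quat52}) that the paper uses. No gaps.
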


\begin{proof}
Given $X,Y\in \Gamma(TB)$, $V,W\in \Gamma(TF)$, and $R\in \{ I,J,K \}$,
by using (\ref{eq: gauss}), (\ref{eq: quat12}), (\ref{eq: quat150}), and (\ref{eq: warp2}),
we have
\begin{align*}
g(h(X,Y), \omega_R V) &= g(\overline{\nabla}_X Y, \omega_R V)  \\
                  &=  g(\overline{\nabla}_X Y, RV - \phi_R V)    \\
                  &= -g(\nabla_X RY, V) + g(Y, \nabla_X \phi_R V)     \\
                  &= g(RY, X(\ln f) V) + g(Y, X(\ln f) \phi_R V)       \\
                  &= 0.
\end{align*}
And replacing $X$ by $RX$ at (\ref{eq: quat52}), we easily get (\ref{eq: quat54}).
\end{proof}

Let $M = B\times_f F$ be a non-trivial warped product proper pointwise h-semi-slant submanifold of a hyperk\"{a}hler manifold $(\overline{M},I,J,K,g)$
such that $TB = \mathcal{D}_1$, $TF = \mathcal{D}_2$, $\dim B = 4n_1$, $\dim F = 2n_2$, $\dim \overline{M} = 4m$,
$\theta_I (p)\theta_J (p)\theta_K (p) \neq 0$ for any $p\in M$,  and $\{ I,J,K \}$ is a pointwise h-semi-slant basis.

Using (\ref{eq: quat21}) and (\ref{eq: quat22}), given $R\in \{ I,J,K \}$, we can choose a local orthonormal frame
$\{ e_1,e_{n_1+1},e_{2n_1+1},e_{3n_1+1},\cdots,e_{n_1},e_{2n_1},e_{3n_1},e_{4n_1},f_1^R,f_{n_2+1}^R,\cdots,f_{n_2}^R,f_{2n_2}^R,$  \\  $w_1^R,w_{n_2+1}^R,
\cdots,w_{n_2}^R,w_{2n_2}^R,v_1,Iv_1,Jv_1,Kv_1,\cdots,v_k,Iv_k,Jv_k,Kv_k \}$, $m = n_1 + n_2 + k$, on $(\overline{M},R,g)$ such that
$\{ e_1,e_{n_1+1},e_{2n_1+1},e_{3n_1+1},\cdots,e_{n_1},e_{2n_1},e_{3n_1},e_{4n_1} \}$ is a local orthonormal frame of $TB$,
$\{ f_1^R,f_{n_2+1}^R,\cdots,f_{n_2}^R,f_{2n_2}^R \}$ is a local orthonormal frame of $TF$,
$\{ w_1^R,w_{n_2+1}^R,\cdots,w_{n_2}^R,w_{2n_2}^R \}$ is a local orthonormal frame of $\omega_R (TF)$,
and $\{ v_1,Iv_1,Jv_1,Kv_1,\cdots,v_k,Iv_k,Jv_k,Kv_k \}$ is a local orthonormal frame of $\mu_R$,
where
\begin{align*}
                  & e_{n_1+i} := Ie_i, e_{2n_1+i} := Je_i, e_{3n_1+i} := Ke_i,     \\
                  &  f_{n_2+j}^R := \sec \theta_R \phi_R f_j^R,    \\
                  & w_j^R := \csc \theta_R \omega_R f_j^R,     \\
                  & w_{n_2+j}^R := \csc \theta_R \omega_R f_{n_2+j}^R = \csc \theta_R \sec \theta_R \omega_R \phi_R f_j^R
\end{align*}
for $1\leq i\leq n_1$ and $1\leq j\leq n_2$.

Using the above notations, we have

\begin{theorem}\label{thm: ineq33}
Let $M = B\times_f F$ be a non-trivial warped product proper pointwise h-semi-slant submanifold of a hyperk\"{a}hler manifold $(\overline{M},I,J,K,g)$
such that $TB = \mathcal{D}_1$, $TF = \mathcal{D}_2$, $\dim B = 4n_1$, $\dim F = 2n_2$, $\dim \overline{M} = 4m$,
$\theta_I (p)\theta_J (p)\theta_K (p) \neq 0$ for any $p\in M$,  and $\{ I,J,K \}$ is a pointwise h-semi-slant basis.

Assume that $m = n_1 + n_2$.

Then given $R\in \{ I,J,K \}$, we get
\begin{equation}\label{eq: quat55}
|| h ||^2 \geq 4n_2 (\csc^2 \theta_R + \cot^2 \theta_R) || \nabla (\ln f) ||^2
\end{equation}
with equality holding if and only if $g(h(V,W), Z) = 0$ for $V,W\in \Gamma(TF)$ and $Z\in \Gamma(TM^{\perp})$.
\end{theorem}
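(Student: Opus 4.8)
The plan is to expand the squared norm $\|h\|^2$ using the orthonormal frame set up just before the theorem and to bound it from below by keeping only the terms that Lemma (equation \eqref{eq: quat54}) forces to be nonzero. The hypothesis $m = n_1 + n_2$ means $k = 0$, so $\mu_R = 0$ and the normal bundle is exactly $\omega_R(TF)$; this is what makes the inequality tight rather than merely a bound. First I would write
$$
\|h\|^2 = \sum_{i,j} g(h(E_i,E_j), h(E_i,E_j)),
$$
where $\{E_i\}$ runs over the tangent frame $\{e_\alpha\}\cup\{f_a^R, f_{n_2+a}^R\}$, and decompose the double sum into the three blocks: both indices in $TB$, both in $TF$, and one in each. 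Since the normal frame is $\{w_j^R, w_{n_2+j}^R\}$ spanning $\omega_R(TF)$, each inner product $g(h(E_i,E_j),\cdot)$ is evaluated against these normal vectors.

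Next I would discard the $TB$–$TB$ block and the $TF$–$TF$ block (they contribute $\geq 0$), retaining only the mixed block, and within the mixed block retain only the components along $\omega_R(TF)$, which is legitimate since $\{w_j^R,w_{n_2+j}^R\}$ is an orthonormal basis of the whole normal space. This gives
$$
\|h\|^2 \geq \sum_{\text{mixed}} \Big( g(h(X,V), w_j^R)^2 + g(h(X,V), w_{n_2+j}^R)^2 \Big),
$$
with $X$ ranging over the $TB$-frame and $V$ over the $TF$-frame. The core computation is then to substitute the explicit frame vectors $w_j^R = \csc\theta_R\,\omega_R f_j^R$ and $w_{n_2+j}^R = \csc\theta_R\sec\theta_R\,\omega_R\phi_R f_j^R$ and apply \eqref{eq: quat54}, namely $g(h(X,V),\omega_R W) = -RX(\ln f)g(V,W) + X(\ln f)g(V,\phi_R W)$, to each term. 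Evaluating \eqref{eq: quat54} on the frame pairs, using \eqref{eq: quat21} to handle the $\phi_R$-inner products and the orthonormality of $\{f_a^R\}$, should collapse the sums over $j$ to produce the factor $\csc^2\theta_R + \cot^2\theta_R$ multiplying $\sum_\alpha (e_\alpha(\ln f))^2 = \|\nabla(\ln f)\|^2$, together with the counting factor $4n_2$ coming from $\dim F = 2n_2$ combined with the two families of normal directions.

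The main obstacle I anticipate is the bookkeeping in this last step: correctly matching the $f_a^R$ and $f_{n_2+a}^R = \sec\theta_R\,\phi_R f_a^R$ tangent directions against the two normal families $w_j^R, w_{n_2+j}^R$, and verifying that the cross terms involving $RX(\ln f)$ either cancel or recombine into $X(\ln f)$-type terms so that only $\|\nabla(\ln f)\|^2$ survives. Because $\nabla(\ln f)$ lives in $TB = \mathcal{D}_1$, which is $R$-invariant, the quantities $RX(\ln f)$ are themselves gradient components along $TB$, and summing $(X(\ln f))^2$ over the full $R$-adapted $TB$-frame $\{e_i, Ie_i, Je_i, Ke_i\}$ recovers the same $\|\nabla(\ln f)\|^2$ by $R$-invariance; this symmetry is what lets the $\csc^2$ and $\cot^2$ contributions add cleanly. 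Finally, equality holds exactly when the two discarded blocks vanish, i.e. $g(h(V,W),Z)=0$ for $V,W\in\Gamma(TF)$ and $Z\in\Gamma(TM^\perp)$, which I would record as the equality condition. Then the result follows.
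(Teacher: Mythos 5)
Your overall strategy is the paper's own: expand $\|h\|^2$ over the adapted frame, split into the $TB$--$TB$, $TF$--$TF$ and mixed blocks, evaluate the mixed block exactly via \eqref{eq: quat54} with $w_k^R=\csc\theta_R\,\omega_R f_k^R$, and use the $R$-invariance of $TB$ to kill the cross terms $\sum_i Re_i(\ln f)\,e_i(\ln f)$ and to get $\sum_i (Re_i(\ln f))^2=\|\nabla(\ln f)\|^2$; the bookkeeping you worry about does close up to give $2\csc^2\theta_R(2n_2+2n_2\cos^2\theta_R)=4n_2(\csc^2\theta_R+\cot^2\theta_R)$, exactly as in the paper. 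The inequality \eqref{eq: quat55} therefore comes out correctly from your outline.

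There is, however, one genuine gap, and it sits in the equality characterization. You propose to \emph{discard} the $TB$--$TB$ block as merely nonnegative, and then assert that ``equality holds exactly when the two discarded blocks vanish, i.e.\ $g(h(V,W),Z)=0$ for $V,W\in\Gamma(TF)$.'' That ``i.e.''\ is not justified as written: if the $TB$--$TB$ block were only known to be $\geq 0$, the equality condition you would actually obtain is the conjunction ``$g(h(X,Y),Z)=0$ for $X,Y\in\Gamma(TB)$ \emph{and} $g(h(V,W),Z)=0$ for $V,W\in\Gamma(TF)$,'' which is not the statement of the theorem. The missing ingredient is \eqref{eq: quat53}: $g(h(X,Y),\omega_R V)=0$ for $X,Y\in\Gamma(TB)$, $V\in\Gamma(TF)$, which together with $\mu_R=0$ (so that $TM^{\perp}=\omega_R(TF)$) shows the $TB$--$TB$ block is \emph{identically zero}, not just nonnegative. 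This is precisely how the paper proceeds, and it is what reduces the equality condition to the vanishing of the $TF$--$TF$ block alone. You cite only \eqref{eq: quat54} as the lemma ``forcing terms to be nonzero''; you also need \eqref{eq: quat53} to discharge the $TB$--$TB$ block and obtain the stated ``if and only if.'' With that one-line addition your argument coincides with the paper's proof.
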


\begin{proof}
Given $R\in \{ I,J,K \}$, since $\mu_R = 0$, we obtain
\begin{align*}
|| h ||^2         &= \sum_{i,j=1}^{4n_1} g(h(e_i,e_j), h(e_i,e_j)) + \sum_{i,j=1}^{2n_2} g(h(f_i^R,f_j^R), h(f_i^R,f_j^R))      \\
                  &+  2\sum_{i=1}^{4n_1} \sum_{j=1}^{2n_2} g(h(e_i,f_j^R), h(e_i,f_j^R))    \\
                  &= \sum_{i,j=1}^{4n_1} \sum_{k=1}^{2n_2} g(h(e_i,e_j), w_k^R)^2 + \sum_{i,j=1}^{2n_2} \sum_{k=1}^{2n_2} g(h(f_i^R,f_j^R), w_k^R)^2      \\
                  &+  2\sum_{i=1}^{4n_1} \sum_{j=1}^{2n_2} \sum_{k=1}^{2n_2} g(h(e_i,f_j^R), w_k^R)^2.
\end{align*}
Using (\ref{eq: quat53}) and (\ref{eq: quat54}), we have
\begin{align*}
|| h ||^2         &= \sum_{i,j,k=1}^{2n_2} g(h(f_i^R,f_j^R), w_k^R)^2      \\
                  &+  2\csc^2 \theta_R \sum_{i=1}^{4n_1} \sum_{j,k=1}^{2n_2} \left(-Re_i(\ln f) g(f_j^R, f_k^R) + e_i(\ln f) g(f_j^R, \phi_R f_k^R)\right)^2   \\
                  &= \sum_{i,j,k=1}^{2n_2} g(h(f_i^R,f_j^R), w_k^R)^2      \\
                  &+  2\csc^2 \theta_R \sum_{i=1}^{4n_1} \sum_{j,k=1}^{2n_2} \left((Re_i(\ln f))^2 g(f_j^R, f_k^R)^2
                  + (e_i(\ln f))^2 g(f_j^R, \phi_R f_k^R)^2   \right. \\
                  &- \left. 2Re_i(\ln f)\cdot e_i(\ln f) g(f_j^R, f_k^R) g(f_j^R, \phi_R f_k^R) \right).
\end{align*}
But since $\nabla (\ln f)\in \Gamma(TB)$ and $R(TB) = TB$, we get
\begin{eqnarray}
\sum_{i=1}^{4n_1} Re_i(\ln f)\cdot e_i(\ln f)
                  & &= \sum_{i=1}^{4n_1} g(\nabla (\ln f), Re_i) g(\nabla (\ln f), e_i)   \label{eq: quat56}    \\
                  & &= -\sum_{i=1}^{4n_1} g(R\nabla (\ln f), e_i) g(\nabla (\ln f), e_i)  \nonumber \\
                  & &= -g(R\nabla (\ln f), \nabla (\ln f))   \nonumber   \\
                  & &=  0.  \nonumber
\end{eqnarray}
Moreover, we also obtain
\begin{eqnarray}
\sum_{i=1}^{4n_1} (Re_i(\ln f))^2
                  & &= \sum_{i=1}^{4n_1} g(Re_i, \nabla (\ln f))^2   \label{eq: quat57}    \\
                  & &= \sum_{i=1}^{4n_1} g(e_i, R\nabla (\ln f))^2 \nonumber  \\
                  & &= g(R\nabla (\ln f), R\nabla (\ln f))   \nonumber   \\
                  & &= g(\nabla (\ln f), \nabla (\ln f))   \nonumber   \\
                  & &= || \nabla (\ln f) ||^2.  \nonumber
\end{eqnarray}
Similarly,
\begin{equation} \label{eq: quat58}
\sum_{i=1}^{4n_1} (e_i(\ln f))^2 = || \nabla (\ln f) ||^2.
\end{equation}
Using (\ref{eq: quat56}), (\ref{eq: quat57}), (\ref{eq: quat58}), and (\ref{eq: quat21}), we have
\begin{align*}
|| h ||^2         &= \sum_{i,j,k=1}^{2n_2} g(h(f_i^R,f_j^R), w_k^R)^2        \\
                  &+ 2\csc^2 \theta_R (|| \nabla (\ln f) ||^2 2n_2 + || \nabla (\ln f) ||^2 \sum_{k=1}^{2n_2} g(\phi_R f_k^R, \phi_R f_k^R))   \\
                  &= \sum_{i,j,k=1}^{2n_2} g(h(f_i^R,f_j^R), w_k^R)^2        \\
                  &+ 2\csc^2 \theta_R (|| \nabla (\ln f) ||^2 2n_2 + || \nabla (\ln f) ||^2 \cos^2 \theta_R \cdot 2n_2)   \\
                  &= \sum_{i,j,k=1}^{2n_2} g(h(f_i^R,f_j^R), w_k^R)^2        \\
                  &+ 4n_2 (\csc^2 \theta_R + \cot^2 \theta_R) || \nabla (\ln f) ||^2
\end{align*}
so that
$$
|| h ||^2 \geq 4n_2 (\csc^2 \theta_R + \cot^2 \theta_R) || \nabla (\ln f) ||^2
$$
with equality holding if and only if $g(h(f_i^R,f_j^R), w_k^R) = 0$ for $1\leq i,j,k \leq 2n_2$.

Therefore, the result follows.
\end{proof}

Using (\ref{eq: quat53}) and Theorem \ref{thm: ineq33}, we get

\begin{corollary}
Let $M = B\times_f F$ be a non-trivial warped product proper pointwise h-semi-slant submanifold of a hyperk\"{a}hler manifold $(\overline{M},I,J,K,g)$
such that $TB = \mathcal{D}_1$, $TF = \mathcal{D}_2$, $\dim B = 4n_1$, $\dim F = 2n_2$, $\dim \overline{M} = 4m$,
$\theta_I (p)\theta_J (p)\theta_K (p) \neq 0$ for any $p\in M$,  and $\{ I,J,K \}$ is a pointwise h-semi-slant basis.

Assume that $m = n_1 + n_2$.

If $|| h ||^2 = 4n_2 (\csc^2 \theta_R + \cot^2 \theta_R) || \nabla (\ln f) ||^2$ for some $R\in \{ I,J,K \}$,
then $M$ is a minimal submanifold of $\overline{M}$.
\end{corollary}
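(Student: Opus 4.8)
The plan is to show directly that the mean curvature vector $H$ vanishes, using the equality clause of Theorem~\ref{thm: ineq33} together with (\ref{eq: quat53}). First I would fix $R\in\{I,J,K\}$ for which the equality $|| h ||^2 = 4n_2 (\csc^2 \theta_R + \cot^2 \theta_R) || \nabla (\ln f) ||^2$ holds. By the equality statement of Theorem~\ref{thm: ineq33}, this forces $g(h(f_i^R,f_j^R), w_k^R) = 0$ for all relevant $i,j,k$; since $\{w_k^R\}$ is a frame for $\omega_R(TF)$ and, under the hypothesis $m = n_1 + n_2$ (so $k=0$ and $\mu_R = 0$), the normal bundle is exactly $TM^{\perp} = \omega_R(TF)$, this already shows $h(V,W) = 0$ for all $V,W\in \Gamma(TF)$.

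Next I would compute $H$ using the adapted orthonormal frame, splitting the trace into its $TB$ and $TF$ contributions:
$$
nH = \sum_{i=1}^{4n_1} h(e_i,e_i) + \sum_{j=1}^{2n_2} h(f_j^R,f_j^R).
$$
The second sum is zero by the previous step, so $H = \tfrac{1}{n}\sum_{i=1}^{4n_1} h(e_i,e_i)$ is built only from second fundamental form values on $TB = \mathcal{D}_1$.

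To finish, I would pair $H$ against an arbitrary normal vector. Since $TM^{\perp} = \omega_R(TF)$, every normal vector has the form $\omega_R V$ with $V\in \Gamma(TF)$, so it suffices to check $g(H,\omega_R V)=0$. But each term satisfies $g(h(e_i,e_i),\omega_R V)=0$ by (\ref{eq: quat53}) (with $X=Y=e_i\in\Gamma(TB)$), hence $g(H,\omega_R V)=0$ for all $V\in\Gamma(TF)$. As $\omega_R(TF)$ exhausts $TM^{\perp}$, this yields $H=0$, i.e.\ $M$ is minimal.

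The only point needing care — and the main conceptual step rather than a computational obstacle — is the identification $TM^{\perp}=\omega_R(TF)$, which rests on the dimension bookkeeping $m=n_1+n_2$ collapsing $\mu_R$ to zero (via (\ref{eq: quat15})); this is what simultaneously lets the equality condition kill $h$ on $\Gamma(TF)\times\Gamma(TF)$ and lets (\ref{eq: quat53}) kill the $\Gamma(TB)$ contribution to $g(H,\cdot)$. Once $\mu_R=0$ is in hand, the remaining steps are immediate. Note that the conclusion holds regardless of which $R$ realizes the equality, since minimality is a property independent of the chosen compatible complex structure.
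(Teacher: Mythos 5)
Your proposal is correct and follows exactly the route the paper intends: the paper states the corollary with only the remark ``Using (\ref{eq: quat53}) and Theorem \ref{thm: ineq33}, we get,'' and your argument supplies precisely those two ingredients, together with the observation that $m=n_1+n_2$ forces $\mu_R=0$ so that $TM^{\perp}=\omega_R(TF)$. No gaps; the trace splitting and the vanishing of both contributions to $H$ are exactly as the paper's sketch requires.
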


\section{Examples}\label{exam}

Note that given an Euclidean space $\mathbb{R}^{4m}$ with
coordinates $(y_1,y_2,\cdots,y_{4m})$, we choose
complex structures $I, J, K$ on $\mathbb{R}^{4m}$ as follows:
\begin{align*}
  &I(\tfrac{\partial}{\partial y_{4k+1}})=\tfrac{\partial}{\partial y_{4k+2}},
  I(\tfrac{\partial}{\partial y_{4k+2}})=-\tfrac{\partial}{\partial y_{4k+1}},
  I(\tfrac{\partial}{\partial y_{4k+3}})=\tfrac{\partial}{\partial y_{4k+4}},
  I(\tfrac{\partial}{\partial y_{4k+4}})=-\tfrac{\partial}{\partial y_{4k+3}},     \\
  &J(\tfrac{\partial}{\partial y_{4k+1}})=\tfrac{\partial}{\partial y_{4k+3}},
  J(\tfrac{\partial}{\partial y_{4k+2}})=-\tfrac{\partial}{\partial y_{4k+4}},
  J(\tfrac{\partial}{\partial y_{4k+3}})=-\tfrac{\partial}{\partial y_{4k+1}},
  J(\tfrac{\partial}{\partial y_{4k+4}})=\tfrac{\partial}{\partial y_{4k+2}},    \\
  &K(\tfrac{\partial}{\partial y_{4k+1}})=\tfrac{\partial}{\partial y_{4k+4}},
  K(\tfrac{\partial}{\partial y_{4k+2}})=\tfrac{\partial}{\partial y_{4k+3}},
  K(\tfrac{\partial}{\partial y_{4k+3}})=-\tfrac{\partial}{\partial y_{4k+2}},
  K(\tfrac{\partial}{\partial y_{4k+4}})=-\tfrac{\partial}{\partial y_{4k+1}}
\end{align*}
for $k\in \{ 0,1,\cdots,m-1 \}$.
Then we easily check that $(I,J,K,\langle \ ,\ \rangle)$ is a hyperk\"{a}hler structure on $\mathbb{R}^{4m}$,
where $\langle \ ,\ \rangle$ denotes the Euclidean metric on $\mathbb{R}^{4m}$.
Throughout this section, we will use these notations.

\begin{example}
Let $(\overline{M},E,g)$ be an almost quaternionic Hermitian manifold.
Then the tangent bundle $T\overline{M}$ of the manifold $\overline{M}$ has the natural almost quaternionic Hermitian structure
such that $\overline{M}$ is a pointwise h-slant submanifold of $T\overline{M}$ with $\theta = 0$ \cite{IMV}.
\end{example}

\begin{example}
Let $(\overline{M},E,g)$ be an almost quaternionic Hermitian manifold.
Let $M$ be a submanifold of $\overline{M}$ such that $(M,E|_M,g|_M)$ is an almost quaternionic Hermitian manifold,
where $E|_M$ and $g|_M$ denote the restrictions of $E$ and $g$ to $M$, respectively.
Then $M$ is a  pointwise h-slant submanifold of $\overline{M}$ with $\theta = 0$.
\end{example}

\begin{example}
Let $M$ be a submanifold of a hyperk\"{a}hler manifold $(\overline{M},I,J,K,g)$. Assume that the submanifold $M$ is complex
with respect to the complex structure $I$ (i.e., $I(TM) = TM$) and totally real with respect to the complex structure
$J$ (i.e., $J(TM) \subset TM^{\perp}$).
Then it is easy to check that $M$ is also
totally real with respect to the complex structure $K$. Given any $C^{\infty}$-function $f : \overline{M} \mapsto [0, \frac{\pi}{2}]$, we define
\begin{align*}
  &\overline{I} := \cos f \cdot I - \sin f \cdot J,     \\
  &\overline{J} := \sin f \cdot I + \cos f \cdot J,    \\
  &\overline{K} := K.
\end{align*}
It is also easy to show that $\{ \overline{I},\overline{J},\overline{K} \}$ is a quaternionic Hermitian basis on $(\overline{M},g)$.
Then $M$ is a pointwise almost h-slant submanifold of an almost quaternionic Hermitian manifold $(\overline{M},\overline{I},\overline{J},\overline{K},g)$
such that $\{ \overline{I},\overline{J},\overline{K} \}$ is a pointwise almost h-slant basis with the almost h-slant functions
$$
\theta_{\overline{I}} = f, \quad \theta_{\overline{J}} = \frac{\pi}{2} - f, \quad \theta_{\overline{K}} = \frac{\pi}{2}.
$$
\end{example}

\begin{example}
Let $(\overline{M}_1,E,g_1)$ be an almost quaternionic Hermitian manifold.
Let $M_1$ be a submanifold of $\overline{M}_1$ such that $(M_1,E|_{M_1},g_1|_{M_1})$ is an almost quaternionic Hermitian manifold,
where $E|_{M_1}$ and $g_1|_{M_1}$ denote the restrictions of $E$ and $g_1$ to $M_1$, respectively.
Let $M_2$ be a submanifold of a hyperk\"{a}hler manifold $(\overline{M}_2,I,J,K,g_2)$ such that the submanifold $M_2$ is complex
with respect to the complex structure $I$ and totally real with respect to the complex structure $J$.
Given any $C^{\infty}$-function $f : \overline{M}_2 \mapsto [0, \frac{\pi}{2}]$, we define
\begin{align*}
  &\overline{I} := \cos f \cdot I - \sin f \cdot J,     \\
  &\overline{J} := \sin f \cdot I + \cos f \cdot J,    \\
  &\overline{K} := K.
\end{align*}
Consider $\overline{M}_1 = (\overline{M}_1,E,g_1)$ and $\overline{M}_2 = (\overline{M}_2,\overline{I},\overline{J},\overline{K},g_2)$.
Let $h : \overline{M}_1 \mapsto \mathbb{R}$ be a positive $C^{\infty}$-function.
Denote by $M_1 \times_h M_2$ and $\overline{M}_1 \times_h \overline{M}_2$ the warped product manifolds of $M_1$, $M_2$
and $\overline{M}_1$, $\overline{M}_2$, respectively.
Then $M_1 \times_h M_2$ is a pointwise h-semi-slant submanifold of $\overline{M}_1 \times_h \overline{M}_2$ such that
$\mathcal{D}_1 = TM_1$, $\mathcal{D}_2 = TM_2$, $\theta_{\overline{I}} = f$, $\theta_{\overline{J}} = \frac{\pi}{2} - f$,
$\theta_{\overline{K}} = \frac{\pi}{2}$.
\end{example}

\begin{example}
Define a map $i : \mathbb{R}^{4} \mapsto \mathbb{R}^{8}$ by
$$
i(x_1,x_2,x_3,x_{4}) = (y_1,y_2,\cdots, y_{8}) = (0,0,x_3,x_1,0,x_4, x_2,0).
$$
Then $\mathbb{R}^{4}$ is a pointwise almost h-semi-slant submanifold of $\mathbb{R}^{8}$ such that
\begin{align*}
&\mathcal{D}_1^I = < \frac{\partial}{\partial y_3},\frac{\partial}{\partial y_4} >, \quad
  \mathcal{D}_2^I = < \frac{\partial}{\partial y_6},\frac{\partial}{\partial y_7} >,   \\
&\mathcal{D}_2^J = < \frac{\partial}{\partial y_3},\frac{\partial}{\partial y_4},\frac{\partial}{\partial y_6},\frac{\partial}{\partial y_7} >,   \\
&\mathcal{D}_1^K = < \frac{\partial}{\partial y_6},\frac{\partial}{\partial y_7} >, \quad
  \mathcal{D}_2^K = < \frac{\partial}{\partial y_3},\frac{\partial}{\partial y_4} >   \\
\end{align*}
with the almost h-semi-slant functions $\theta_I = \theta_J = \theta_K = \frac{\pi}{2}$.
\end{example}

\begin{example}
Let $f : \mathbb{R}^{8} \mapsto [0, \frac{\pi}{2}]$ be a $C^{\infty}$-function.
Consider a quaternionic Hermitian basis $\{ \overline{I},\overline{J},\overline{K} \}$ on $\mathbb{R}^{8}$ such that
\begin{align*}
  &\overline{I} := \cos f \cdot I - \sin f \cdot J,     \\
  &\overline{J} := \sin f \cdot I + \cos f \cdot J,    \\
  &\overline{K} := K.
\end{align*}
Define a map $i : \mathbb{R}^{6} \mapsto \mathbb{R}^{8}$ by
$$
i(x_1,x_2,\cdots,x_{6}) = (y_1,y_2,\cdots, y_{8}) = (0,0,x_4,x_1,x_5,x_2,x_6,x_3).
$$
Then $\mathbb{R}^{6}$ is a pointwise h-semi-slant submanifold of $(\mathbb{R}^{8},\overline{I},\overline{J},\overline{K},\langle \ ,\ \rangle)$ such that
\begin{align*}
&\mathcal{D}_1 = < \frac{\partial}{\partial y_5},\frac{\partial}{\partial y_6},\frac{\partial}{\partial y_7},\frac{\partial}{\partial y_8} >,       \\
&\mathcal{D}_2 = < \frac{\partial}{\partial y_3},\frac{\partial}{\partial y_4} >   \\
\end{align*}
with the h-semi-slant functions $\theta_{\overline{I}} = f$, $\theta_{\overline{J}} = \frac{\pi}{2} - f$, $\theta_{\overline{K}} = \frac{\pi}{2}$.
\end{example}

\section*{Acknowledgments}

The author is grateful to the referees for their valuable comments and suggestions.

\end{document}